\numberwithin{equation}{section}
\theoremstyle{plain}
\newtheorem{prop}{Proposition}[section]
\newtheorem{lem}[prop]{Lemma}
\newtheorem{thm}[prop]{Theorem}
\theoremstyle{definition}
\newtheorem{question}[prop]{Question}
\theoremstyle{remark}
\newtheorem{remark}[prop]{Remark}
\newcommand\F{\mathbb{F}}
\newcommand\N{\mathbb{N}}
\newcommand\Q{\mathbb{Q}}
\newcommand\R{\mathbb{R}}
\newcommand\Z{\mathbb{Z}}
\newcommand\C{\mathbb{C}}
\begin{document}
\title[Diophantine approximation]{Diophantine approximation with restricted numerators and denominators
on semisimple groups}

\author{A.\@ Gorodnik}
\author{S.\@ Kadyrov}

\address[AG,SK]{School of Mathematics
University of Bristol
Bristol BS8 1TW, U.K.}
\email[AG]{a.gorodnik@bristol.ac.uk}
\email[SK]{shirali.kadyrov@bristol.ac.uk}

\begin{abstract}
We consider the problem of Diophantine approximation on semisimple algebraic groups 
by rational points with restricted numerators and denominators and establish a quantitative 
approximation result for all real points in the group by rational points
with a prescribed denominator and an almost prime numerator.
\vspace{0.2cm}

\vspace{0.5cm}

\begin{center}
\noindent {\sc Approximation diophantienne sur les groupes semi-simples avec num\'erateurs 
et d\'enominateurs restreintes}
\end{center}

\vspace{0.2cm}

\noindent Nous consid\'erons le probl\`eme de l'approximation diophantienne sur les groupes alg\'ebriques semi-simples
par des points rationnels avec les num\'e\-rateurs  et les d\'enominateurs restreintes et 
nous \'etablissons  un 
r\'esultat quantitatif d'approximation pour tous les points r\'eels dans le groupe par des points rationnels
avec un d\'enominateur sp\'ecifi\'e et un num\'erateur presque premier.

\end{abstract}

\maketitle

\section{Introduction}\label{sec:intro}

In this paper we are interested in the problem of Diophantine approximation
of real points $x$ by rational points $\frac{u}{v}$, where the numerator $u$ and the denominator $v$
are restricted to interesting arithmetic sets; for instance, when $u,v$ are primes or $r$-primes. 
Recall that an integer is called $r$-prime if it is a product of at most $r$ prime factors
counted with multiplicities.

Starting from the work of Vinogradov \cite{vi}, the question for which $\alpha>1$ the
inequality 
\begin{equation}\label{eq:D}
\left|x-\frac{u}{v}\right|\le v^{-\alpha},\quad x\in \R,
\end{equation}
has infinitely many solutions with integral $u$ and prime $v$ attracted significant attention
(see \cite{va77,h83,b86,j93,h96,hj02,m09}).
When $u$ is $r$-prime and $v$ is prime, this question has been investigated in
\cite{va76,h84}, but it still seems open when both $u$ and $v$ are 
assumed to be prime (see \cite{r,s}).

Here we consider an analogous question for the Diophantine approximation 
on semisimple algebraic groups. For instance, let us consider a special linear
group 
${\rm SL}_N=\{x\in \hbox{Mat}_N(\C):\, \det(x)=1\}.$
It is well known that ${\rm SL}_N(\Q)$ is dense in ${\rm SL}_N(\R)$,
and explicit quantitative density estimates have been established in \cite{ggn}.
Now it is natural to ask whether we can approximate any $x\in {\rm SL}_N(\R)$
by rational points $z\in {\rm SL}_N(\Q)$ whose coordinates have 
prescribed arithmetic properties. In particular, 

\begin{question}
{\it 
Is the set of points
in ${\rm SL}_N(\Q)$ with prime denominators and $r$-prime numerators dense in ${\rm SL}_N(\R)$?
}
\end{question}

As we shall show, this is indeed the case, and moreover, a quantitative estimate similar to (\ref{eq:D}) holds. 

In full generality, our result deals with a simply connected semisimple algebraic $\Q$-group
${\rm G}\subset \hbox{GL}_N$ which is split over $\Q$ and $\Q$-simple.
It is known that ${\rm G}(\Q)$ is dense in ${\rm G}(\R)$. Moreover, it
follows from the strong approximation property \cite[\S7.4]{pr} that for every
$n\ge 2$, the set ${\rm G}(\Z[1/n])$ is dense in ${\rm G}(\R)$.
Every $z\in \hbox{G}(\Q)$ can be uniquely written as $z=v^{-1}\, u$ with $v\in \N$
and $u\in \hbox{Mat}_N(\Z)$ such that $\gcd(u_{11},\ldots,u_{NN},v)=1$.
We use the denominator $\hbox{den}(z):=v$ to measure complexity of rational points
and quantify their density in $\hbox{G}(\R)$
with respect to a right invariant Riemannian metric $d$ on ${\rm G}(\R)$.

We note that it might happen that the set of $z\in \hbox{G}(\Q)$ such that $\hbox{den}(z)=n$ could be empty,
so that the problem of Diophantine approximation does not make sense for general denominators.
We say that an integer $n$ is \emph{admissible} if there exists $z\in \hbox{G}(\Q)$ such that $\hbox{den}(z)=n$.
Because of the weak approximation property the set of admissible denominators can be described in terms
of local abstractions. Namely, $n=\prod_p p^{\alpha_p}$ is admissible if for every $p$,
there exists $g\in {\rm G}(\Q_p)$ such that $\|g\|_p=p^{\alpha_p}$, where $\|\cdot\|_p$ denotes the
maximal norm.

Let $f_1,\ldots,f_t$ be a collection of polynomials on $\hbox{Mat}_N(\C)$ with integral
coefficients. We assume that $f_i$'s considered as elements of the coordinate ring 
$\Q[{\rm G}]$ are nonzero, distinct, and absolutely irreducible.
We say that an element $z\in {\rm Mat}_N(\Z[1/n])$ is {\it $r$-prime},
with respect to the family of polynomials $f_1,\ldots,f_t$,
if $f_1(z)\cdots f_t(z)$ is a product of at most $r$ prime factors in the ring $\Z[1/n]$.

Our main result is the following

\begin{thm}\label{thm:main_short}
Given a simply connected semisimple algebraic
group $\rm G\subset {\rm GL}_N$ defined over $\Q$, which is split over $\Q$
and $\Q$-simple, and
a collection of polynomials $f_1,\ldots,f_t$ as above,
there exist explicit $\alpha>0$ and $r\in \N$ such that
for every $x\in {\rm G}(\R)$ and admissible $n\ge n_0(x)$, one can find
$z\in {\rm G}(\Q)$ satisfying 
\begin{align*}
&d(x,z)\le n^{-\alpha},\\
&\hbox{\rm den}(z)=n,
\end{align*}
and $r$-prime in ${\rm Mat}_n(\Z[1/n])$.
Moreover, the constant $n_0(x)$ is uniform over $x$ in bounded subsets of ${\rm G}(\R)$.
\end{thm}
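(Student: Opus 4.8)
The plan is to translate the statement into an effective lattice-point count on an $S$-arithmetic group and then run a combinatorial sieve on the point set it produces. Fix $x\in G(\R)$ and a large integer $n$, and set $S=S_n:=\{\infty\}\cup\{p:p\mid n\}$. By strong approximation --- applicable since $G$ is simply connected, $\Q$-simple and isotropic over $\Q$, so that $G(\Q_S)$ is noncompact --- the group $\Gamma_n:=G(\Z[1/n])$ is a lattice in $G_S:=\prod_{v\in S}G(\Q_v)$ (with $\Q_\infty=\R$). For $z\in\Gamma_n$, $\mathrm{den}(z)$ is read off from the $\Q_p$-components, and $\mathrm{den}(z)=n$ is equivalent to the requirement that for each $p\mid n$ the $\Q_p$-component of $z$ lie in an explicit bounded set $\Omega_p\subset G(\Q_p)$ of positive Haar measure (a finite union of $G(\Z_p)$-double cosets); likewise $d(x,z)\le n^{-\alpha}$ says the archimedean component lies in the metric ball $B:=B(x,n^{-\alpha})$. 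Thus the $z$ demanded by the theorem are exactly the elements of $\Gamma_n\cap\bigl(B\times\prod_{p\mid n}\Omega_p\bigr)$ for which the rational number $F(z):=f_1(z)\cdots f_t(z)\in\Z[1/n]$ has few prime factors; here prime factors are counted in $\Z[1/n]$, so only primes $p\nmid n$ matter and the factors of $F(z)$ dividing $n$, being units of $\Z[1/n]$, are ignored.

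\textbf{Step 1: effective equidistribution of the denominator shells.} The central estimate is, for some fixed $\delta>0$ and with $\alpha$ taken sufficiently small,
\[
\#\Bigl(\Gamma_n\cap\bigl(B\times\prod_{p\mid n}\Omega_p\bigr)\Bigr)=c(n)\,\vol(B)\,\bigl(1+O(n^{-\delta})\bigr),
\]
where $c(n)$ is the Haar volume of $\prod_{p\mid n}\Omega_p$ divided by the covolume of $\Gamma_n$ in $G_S$, an explicit main term satisfying $c(n)\gg n^{\beta}$ for a suitable fixed $\beta>0$. This is the standard $S$-arithmetic lattice-point argument: smooth the indicator of the target, expand in the automorphic spectrum of $L^2(\Gamma_n\backslash G_S)$, and estimate the nontrivial part via effective decay of matrix coefficients. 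The decisive inputs are a spectral gap for $L^2(\Gamma_n\backslash G_S)$ that is uniform in $n$ --- property $\tau$ for the congruence tower of $G$ (Selberg and Clozel in the rank-one case, Kazhdan's property (T) in higher rank), which is what renders $\delta$ independent of $n$ --- and the lower bound $\vol(B(x,n^{-\alpha}))\gg n^{-\alpha\dim G}$, uniform for $x$ in a fixed compact set. The latter, together with the polynomial loss incurred when smoothing a ball of radius $n^{-\alpha}$, is precisely what forces $\alpha$ to be small (in terms of the spectral-gap exponent, of $\beta$, and of $\dim G$) so that the main term dominates. Running the same argument with $\Gamma_{nq}=G(\Z[1/(nq)])$ and its level-$q$ congruence subgroup, for $q$ coprime to $n$, produces the same count with an imposed congruence $z\equiv\gamma\pmod q$ and main term divided by $\#G(\Z/q\Z)$; as the spectral gap is uniform in $q$ as well, this yields a genuine level of distribution: the count above is equidistributed modulo $q$, with acceptable error, uniformly for $q\le Q$, where $Q$ is a fixed positive power of $c(n)\,\vol(B)$.

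\textbf{Step 2: the sieve.} Given Step 1, a lower-bound combinatorial sieve applies to the family $\mc A=\bigl(F(z)\bigr)_z$, with $z$ ranging over $\Gamma_n\cap\bigl(B\times\prod_{p\mid n}\Omega_p\bigr)$: the relevant integers, namely the numerators of $F(z)$ cleared of denominators and of their prime factors dividing $n$, have size $n^{O(1)}$, while Step 1 supplies the main term $X:=c(n)\,\vol(B)$ and the level of distribution $Q=X^{\theta}$ for some fixed $\theta>0$. The sifting density at a prime $p\nmid n$ is governed by $\#\{z\in G(\F_p):F(z)\equiv0\}/\#G(\F_p)$; since each $f_i$ is nonzero and absolutely irreducible in $\Q[G]$, its zero locus in $G$ is a geometrically irreducible hypersurface, and the Lang--Weil estimate bounds this density by $(t+O(p^{-1/2}))/p$ for all large $p$. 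Hence the sieve has dimension at most $t$, the sifting density stays bounded away from $1$, and the sieve does not annihilate $\mc A$. The fundamental lemma of the combinatorial sieve in a lower-bound form (Brun, or Diamond--Halberstam--Richert) then gives $\gg X/(\log n)^{t}$ values $z$ whose $F(z)$ has at most $r$ prime factors in $\Z[1/n]$, with $r$ explicit in terms of $t$, $\theta$, $\deg(f_1\cdots f_t)$ and $\beta$. This count is positive as soon as $X\gg(\log n)^{t}$, i.e. for $n\ge n_0(x)$, and the only dependence of $n_0(x)$ on $x$ enters through the lower bound for $\vol(B(x,n^{-\alpha}))$, which is uniform for $x$ in bounded sets. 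Any such $z$ satisfies $d(x,z)\le n^{-\alpha}$, $\mathrm{den}(z)=n$ and is $r$-prime; tracing the exponents through Steps 1--2 makes $\alpha$ and $r$ explicit.

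\textbf{The main obstacle.} The technical heart is Step 1: coupling the $n$-uniform spectral gap with counting in a shrinking archimedean target subject to a congruence constraint, with error terms controlled uniformly not only in $n$ but in the possibly large set $S_n$ of primes dividing $n$, so as to extract a power-saving bound strong enough both to survive multiplication against the tiny volume $n^{-\alpha\dim G}$ and to leave a level of distribution $Q$ that is a fixed positive power of $X$ --- the last point being exactly what keeps the number of prime factors $r$ bounded rather than growing with $n$. Making all of these exponents explicit and mutually consistent is where the real work lies.
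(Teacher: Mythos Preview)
Your proposal is correct and follows essentially the same route as the paper: an $S$-arithmetic lattice-point count in $B(x,n^{-\alpha})\times\prod_{p\mid n}\Omega_p$ driven by a mean ergodic theorem with a spectral gap uniform in $n$ and $q$ (the paper's Theorems~3.2 and~5.1), followed by the Halberstam--Richert combinatorial sieve with density $\rho(p)=t+O(p^{-1/2})$ coming from Lang--Weil. One small discrepancy: for the congruence condition the paper works with the congruence subgroup $\Gamma_n(q)\subset\Gamma_n$ inside the \emph{same} ambient group $G_n$ rather than passing to $\Gamma_{nq}$ with an enlarged set of places, and the uniform volume bounds you allude to ($c(n)\gg n^\beta$ and the covolume of $\Gamma_n$ bounded independently of $n$) are handled separately in the paper's Section~4 and are a genuine part of the ``main obstacle'' you identify.
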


\begin{remark}
Let $\|\cdot\|_\infty$ be the Euclidean norm on $\hbox{Mat}_N(\R)$.
Using that ${\rm G}(\R)$ is a submanifold of $\hbox{Mat}_N(\R)$, one can check that 
$$
\|x_1-x_2\|_\infty \ll d(x_1,x_2),\quad x_1,x_2\in{\rm G}(\R),
$$
where the implied constant is uniform over $x_1,x_2$ in bounded subset of 
${\rm G}(\R)$. Hence, Theorem \ref{thm:main_short} also implies a Diophantine approximation
result with respect to the Euclidean norm.
\end{remark}


Next we consider the case when $\rm G$ is not necessarily split over $\Q$.
Then the set
${\rm G}(\Z[1/p])$, where $p$ is prime, might be discrete in ${\rm G}(\R)$. 
In fact,  ${\rm G}(\Z[1/p])$ is dense in ${\rm G}(\R)$
if and only if the group ${\rm G}$ is 
isotropic over the $p$-adic field $\Q_p$ (see \cite[Th.~7.12]{pr}). Under this assumption we  
prove a weaker version of Theorem \ref{thm:main_short} for
${\rm G}(\Z[1/p])\subset {\rm G}(\R)$ where the parameters 
$\alpha,r,n_0(x)$ might depend on $p$.

\begin{thm}\label{thm:main_short2}
Given a simply connected $\Q$-simple algebraic
group $\rm G\subset {\rm GL}_N$ defined over $\Q$, a collection of polynomials $f_1,\ldots,f_t$  as above,
and a finite collection $\mathcal{P}$ of primes such that $\rm G$ is
isotropic over $\Q_p$ for all $p\in \mathcal{P}$,
there exist explicit $\alpha>0$ and $r\in \N$ such that 
for every $x\in {\rm G}(\R)$ and admissible $n\ge n_0(x)$ whose prime divisors are in $\mathcal{P}$, one can find
$z\in {\rm G}(\Q)$ satisfying 
\begin{align*}
&d(x,z)\le n^{-\alpha},\\
&\hbox{\rm den}(z)=n,
\end{align*}
and $r$-prime in ${\rm Mat}(\Z[1/n])$.
Moreover, the constant $n_0(x)$ is uniform over $x$ in bounded subsets of ${\rm G}(\R)$.
\end{thm}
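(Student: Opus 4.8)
The plan is to transpose the argument behind Theorem~\ref{thm:main_short} into the $S$-arithmetic setting determined by the prime divisors of $n$, keeping track of the fact that the ambient groups and lattices now vary through a finite family. Fix $n$ with all prime divisors in $\mathcal P$ and set $S=\{\infty\}\cup\{p:p\mid n\}$, so that $S$ ranges only over subsets of $\{\infty\}\cup\mathcal P$. Since ${\rm G}$ is simply connected, $\Q$-simple and isotropic over $\Q_p$ for each $p\in\mathcal P$, strong approximation \cite[\S7.4]{pr} shows that ${\rm G}(\Z[1/n])$ is a lattice in ${\rm G}(\Q_S)={\rm G}(\R)\times\prod_{p\mid n}{\rm G}(\Q_p)$ which is dense in the finite part; equivalently ${\rm G}(\Z)$ is a lattice in ${\rm G}(\R)$ and reduction modulo $\prod_{p\mid n}{\rm G}(\Z_p)$ identifies ${\rm G}(\Z[1/n])\backslash{\rm G}(\Q_S)/\prod_{p\mid n}{\rm G}(\Z_p)$ with ${\rm G}(\Z)\backslash{\rm G}(\R)$. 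Under this identification the set $T_n:=\{z\in{\rm G}(\Q):\hbox{den}(z)=n\}$ is a finite union of Hecke orbits, indexed by those double cosets of $\prod_{p\mid n}{\rm G}(\Z_p)$ whose elementary divisors at each $p\mid n$ produce denominator exactly $p^{v_p(n)}$; the isotropy of ${\rm G}$ over every $\Q_p$ with $p\mid n$ is precisely what makes $T_n$ nonempty and forces $|T_n|\to\infty$, in fact $|T_n|\gg n^{\beta}$ for some fixed $\beta=\beta(\mathcal P)>0$.

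Next I would establish quantitative equidistribution of the translate $x\,T_n$ in ${\rm G}(\R)$ as $n\to\infty$, i.e.\ of Hecke points in ${\rm G}(\Z)\backslash{\rm G}(\R)$, with an explicit rate — the same mechanism as in the proof of Theorem~\ref{thm:main_short} and as in \cite{ggn}. Combining the spectral gap of $L^2_0({\rm G}(\Z)\backslash{\rm G}(\R))$ (property~(T) in higher rank; known bounds toward the Ramanujan/Selberg conjectures otherwise) with bounds on the local Hecke eigenvalues at the primes dividing $n$ yields, for smooth compactly supported $\phi$ on ${\rm G}(\R)$,
\[
\Big|\,\frac{1}{|T_n|}\sum_{z\in T_n}\phi(xz)-\int_{{\rm G}(\R)}\phi\,d\mu\,\Big|\ \ll\ |T_n|^{-\delta}\,\mathcal S(\phi)
\]
for some $\delta>0$ and a fixed Sobolev norm $\mathcal S$. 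Testing against smooth bumps supported on the $d$-ball $B(x,\rho)$ then gives $x\,T_n\cap B(x,\rho)\neq\emptyset$ as soon as $\rho\gg|T_n|^{-\delta'}\gg n^{-\alpha_0}$ for appropriate $\delta',\alpha_0>0$. The one difference from Theorem~\ref{thm:main_short} is that $\delta$, $\delta'$, $\beta$ and the implied constants depend on which of the finitely many $S$-adic quotients occurs; taking the worst case over $S\subseteq\{\infty\}\cup\mathcal P$ produces a single $\alpha_0=\alpha_0(\mathcal P)>0$.

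To replace ``a point in $B(x,n^{-\alpha})$'' by ``an $r$-almost-prime point in $B(x,n^{-\alpha})$'' I would run the affine sieve exactly as in the proof of Theorem~\ref{thm:main_short}. One needs the equidistribution above refined by a congruence constraint $z\bmod q$, with total error over moduli $q\le|T_n|^{\tau}$ of size $\ll|T_n|^{1-\delta''}$ for some explicit level $\tau>0$; this comes from property~($\tau$) for the congruence quotients of ${\rm G}(\Z)$ — available since ${\rm G}$ is simply connected and $\Q$-simple — with a spectral gap uniform over the finitely many relevant $S$, hence $\tau=\tau(\mathcal P)>0$. Together with the multiplicative local densities of the hypersurface $\{F=0\}$, where $F:=f_1\cdots f_t$, in the finite reductions of ${\rm G}$ — controlled by Lang--Weil estimates since the $f_i$ are nonzero, distinct and absolutely irreducible on ${\rm G}$ — this feeds into a combinatorial sieve and shows that a positive proportion of $z\in T_n\cap B(x,n^{-\alpha})$, for any fixed $\alpha<\alpha_0$ small enough that $|T_n|\,n^{-\alpha\dim{\rm G}}\to\infty$ and thereby absorbs the sieve's logarithmic loss, satisfies: $F(z)$ is a product of at most $r=r(\deg F,\dim{\rm G},\tau)$ primes in $\Z[1/n]$. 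In particular at least one such $z$ exists once $n\ge n_0(x)$; and since all of these estimates are governed only by Sobolev norms of bumps centered at $x$ and by the $\R$-geometry of the balls $B(x,\rho)$, the threshold $n_0(x)$ is uniform on bounded subsets of ${\rm G}(\R)$.

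I expect the main obstacle to be the equidistribution of $T_n$ with an \emph{effective}, power-of-$n$ rate and a usable level of distribution in the genuinely $\Q$-anisotropic case: one must check that the spectral gaps of ${\rm G}(\Z)\backslash{\rm G}(\R)$ and of its congruence covers are effective and uniform over the finitely many $S\subseteq\{\infty\}\cup\mathcal P$, and that the local Hecke bounds at the primes $p\mid n$ combine to deliver both the growth $|T_n|\gg n^{\beta}$ and the error $\ll|T_n|^{-\delta}$ — in essence a quantitative form of strong approximation that prescribes the denominator exactly rather than only up to bounded factors. Granting that, the rest is a routine transcription of the proof of Theorem~\ref{thm:main_short}, with $\Z[1/n]$ and ${\rm G}(\Q_S)$ in place of the $\Q$-isotropic data, and with every constant tracked so that it depends only on the finite set $\mathcal P$.
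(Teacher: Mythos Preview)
Your proposal is correct and follows essentially the same strategy as the paper: the paper's own proof of Theorem~\ref{thm:main_short2} (stated as Theorem~\ref{thm:mainn2}) is literally one sentence, saying it runs verbatim the argument for Theorem~\ref{thm:mainn} but with the averaging-operator estimate of Theorem~\ref{prop:QMET0} in place of Theorem~\ref{prop:QMET}, so that all constants may depend on the fixed finite set~$\mathcal P$. Your packaging via Hecke orbits on the real quotient ${\rm G}(\Z)\backslash{\rm G}(\R)$ is the ``unfolded'' form of the paper's $S$-arithmetic averaging on $Y_{n,q}=G_n/\Gamma_n(q)$; the spectral input (property~$(\tau)$ from \cite{cl}), the counting with congruence constraints, and the combinatorial sieve are identical in both presentations.
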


More explicit statements of Theorems \ref{thm:main_short} 
and \ref{thm:main_short2} are given in Section \ref{sec:end} below.

The proof of the main theorems is based on the uniform spectral gap property for the 
automorphic unitary representations
and the asymptotic analysis of suitable averaging operators
combined with standard number-theoretic sieving arguments.
In the following section we introduce essential notation and outline the strategy of the proof
in more details.

\subsection*{Acknowledgements}
The first author is support by EPSRC, ERC and RCUK, and the second author is
supported by EPSRC. We would like to thank A.~Haynes for useful comments.

\section{Initial set-up}\label{sec:notation}

Throughout the paper, $p$ always denotes a prime number. 

Let ${\rm G}\subset {\rm GL}_N$ be a simply connected $\Q$-simple algebraic group defined over $\Q$.
We also use the same notation for the corresponding integral model of $\rm G$ defined by the embedding
of $\rm G$ into ${\rm GL}_N$.

For $n\in\N$, we set
\begin{align*}
G_n^{\rm f}:=\prod_{p|n} {\rm G}(\Q_p)\quad\hbox{and}\quad
G_n:= {\rm G}(\R)\times G_n^{\rm f}.
\end{align*}
Let
$$
\Gamma_n:={\rm G}(\Z [1/n]).
$$
We consider $\Gamma_n$ as a subgroup of $G_n$ embedded in $G_n$ diagonally.
Then $\Gamma_n$ is a discrete subgroup with finite covolume (see \cite[\S5.4]{pr}).

For every prime $p$, we fix a special maximal compact open subgroup $U_p$ of ${\rm G}(\Q_p)$
(as defined in \cite{bt0,tits}), so that $U_p={\rm G}(\Z_p)$ for almost all $p$. 

We denote by $m_{{\rm G}(\Q_p)}$ the 
Haar measure on ${\rm G}(\Q_p)$ normalised so that 
$$
m_{{\rm G}(\Q_p)}(U_p)=1.
$$
The Haar measure $m_{G_n^{\rm f}}$ on $G_n^{\rm f}$ is the product of the measures
$m_{{\rm G}(\Q_p)}$ over $p$ dividing $n$. The Haar measure $m_{G_n}$ on $G_n$
is the product of a Haar measure $m_{{\rm G}(\R)}$ on ${\rm G}(\R)$ and the measure
$m_{G_n^{\rm f}}$. 

For $q\in \N$, coprime to $n$, 
we define the congruence subgroups
$$
\Gamma_n(q):=\{\gamma \in \Gamma_n : \gamma =\hbox{id}\bmod q \}.
$$ 
Clearly, $\Gamma_n(q)$ is a finite index normal subgroup of $\Gamma_n$, and the space 
$$
Y_{n,q}:=G_n/\Gamma_n(q)
$$
has finite volume.
For simplicity, we also set $Y_{n}:=G_n/\Gamma_n$.
We denote by $m_{Y_{n,q}}$ the invariant measure on $Y_{n,q}$ induced by $m_{G_n}$
and by $\mu_{Y_{n,q}}$ the probability invariant measure on $Y_{n,q}$, so that
$\mu_{Y_{n,q}}=\frac{m_{Y_{n,q}}}{m_{Y_{n,q}}(Y_{n,q})}$.

We denote by $\|\cdot\|_p$ the maximum norm on $\hbox{Mat}_N(\Q_p)$.
Given $n\in \N$ with prime decomposition $n=\prod_p p^{\alpha_p}$, we set 
\begin{equation}\label{eq:bnp}
B_{n,p} :=\{g\in {\rm G}(\Q_p):\, \|g\|_p=p^{\alpha_p}\}\quad\hbox{and}\quad
B_n^{\rm f}:=\prod_{p|n} B_{n,p}.
\end{equation}
Throughout the paper we always assume that the integer $n$ is admissible, namely,
the set $B_n^{\rm f}$ is not empty.
Note that $B_{n,p}$ is a compact subset of ${\rm G}(\Q_p)$, which is invariant under
the compact open subgroup ${\rm G}(\Z_p)$. In particular, $B_{n,p}$ is invariant under $U_p$
for almost all $p$.
We fix a right-invariant Riemannian metric on ${\rm G}(\R)$. For $x\in {\rm G}(\R)$ and $\epsilon>0$, we set
\begin{align*}
B^\infty (x,\epsilon)&:=\{g \in {\rm G}(\R):\, d(g,x)\le \epsilon\},\\
B_n(x,\epsilon) &:= B^\infty (x,\epsilon)\times B_n^{\rm f}.
\end{align*}

We denote by $L^2(Y_{n,q})=L^2(Y_{n,q},\mu_{Y_{n,q}})$ the Hilbert space of square-integrable functions
on $Y_{n,q}$, and by $L_0^2(Y_{n,q})$ the subspace of functions with zero integral.
For $p=\infty$ and a prime $p$ dividing $n$, 
the group ${\rm G}(\Q_p)$ naturally acts on $Y_{n,q}$, and 
we denote by $\pi_{Y_{n,q},p}$ the corresponding unitary representation of ${\rm G}(\Q_p)$
on $L^2(Y_{n,q})$. We denote by $\pi_{Y_{n,q}}$ and 
$\pi_{Y_{n,q},{\rm f}}$ the unitary representations of $G_n$ and $G_n^{\rm f}$ on $L^2(Y_{n,q})$
respectively. It would be also convenient to denote by 
$\pi_{Y_{n,q},p}^0$, $\pi^0_{Y_{n,q}}$, $\pi^0_{Y_{n,q},{\rm f}}$
the restrictions of the above representations to $L_0^2(Y_{n,q})$.

Given a unitary representation $\pi$ of a locally compact group $G$ on a Hilbert space
$\mathcal{H}$ and a finite Borel measure $\beta$ on $G$, we denote by
$\pi(\beta):\mathcal{H}\to\mathcal{H}$ the corresponding averaging operator 
defined by
$$
\pi(\beta)v=\int_G \pi(g)v\, d\beta(g), \quad v\in \mathcal{H}.
$$
We note that if $\beta$ is a probability measure, then $\|\pi(\beta)\|\le 1$.

The crucial ingredient of our argument is  the study of suitable averaging operators on the space $L^2(Y_{n,q})$.
Namely, we denote by $\beta_{n,x,\epsilon}$ the uniform probability measure on $G_n$ supported on the set
$B_n(x,\epsilon)$. This defines an averaging operator
\begin{align}
\pi_{Y_{n,q}}(\beta_{n,x,\epsilon}):L^2(Y_{n,q}) &\to L^2(Y_{n,q}): \label{eq:beta}\\
 f\;\quad&\mapsto
\frac{1}{m_{G_n}(B_n(x,\epsilon))}\int_{B_n(x,\epsilon)} \pi_{Y_{n,q}}(g)f\, dm_{G_n}(g).\nonumber
\end{align}
We note that this operator is well defined only for admissible $p$.

A unitary representation $\pi$ of a locally compact group $G$ on a Hilbert space
$\mathcal{H}$ is called {\it $L^{r+}$ integrable} if there exists a dense family of vectors
$v_1,v_2\in \mathcal{H}$ such that the function
$$
g\mapsto \left<\pi(g)v_1,v_2\right>,\quad g\in G,
$$
is in $L^{r+\delta}(G)$ for every $\delta>0$.
In our setting, it follows from the property $(\tau)$, established in \cite{cl}, that
the representations $\pi_{Y_{n,q},p}$, restricted to $L_0^2(Y_{n,q})$, are uniformly integrable.
Namely, there exists $r\ge 2$, independent of $n,q,p$, such that
all the representations $\pi_{Y_{n,q},p}$, restricted to $L_0^2(Y_{n,q})$, are $L^{r+}$ integrable.
We denote by $r({\rm G})$ the least real number with this property.
Let $\iota({\rm G})$ be the least even integer greater than or equal to $r({\rm G})/2$ if $r({\rm G})>2$,
and $\iota({\rm G})=1$ if $r({\rm G})=2$.

\subsection*{Outline of the proof}
In Section \ref{sec:ergodic}, we analyse the asymptotic behaviour of the averaging
operators $\pi_{Y_{n,q}}(\beta_{n,x,\epsilon})$ and establish a quantitative mean ergodic theorem
for them, namely, an  estimate of the form 
\begin{equation}\label{eq:0}
\left\|\pi_{Y_{n,q}}(\beta_{n,x,\epsilon})|_{L_0^2(Y_{n,q})}\right\|\ll
m_{G^{\rm f}_n}(B_n^{\rm f})^{-\theta},
\end{equation}
where $\theta>0$ is determined by the integrability exponent $r({\rm G})$.
This argument is based on the techniques developed in \cite{GorNev1, GorNev2},
but it is crucial for our application that the implied constant in (\ref{eq:0})
is uniform on $n$, and this requires additional considerations.

Section \ref{sec:vol} plays auxiliary role. In this section we establish several volume 
estimates which might be of independent interest. We use these estimates in 
the later sections to guarantee uniformity in the parameter $n$.

In Section \ref{sec:counting}, we use (\ref{eq:0}) to estimate 
the cardinality of elements in $\bar\gamma \Gamma_n(q)$ lying in the regions
$B_n(x,\epsilon)$, for admissible $n$. Typically, such a counting estimate
requires that the regions are well-rounded in the sense of \cite[Def.~1.1]{GorNev2},
but the regions $B_n(x,\epsilon)$ are not well-rounded as $\epsilon\to 0^+$.
Nonetheless, we shall establish a quantitative estimate for 
$|B_n(x,\epsilon)\cap \bar\gamma \Gamma_n(q)|$ as $n\to \infty$.

Finally, in Section \ref{sec:end}, we use a combinatorial sieving argument as in
\cite{HalRic,NevSar,GorNev} to estimate the cardinality of almost prime points lying in the regions
$B_n(x,\epsilon)$. This leads to the proof of the main theorems.


\section{Averaging operators}
\label{sec:ergodic}
 
In this section, we study the asymptotic behaviour of
the averaging operators $\pi_{Y_{n,q}}(\beta_{n,x,\epsilon})$ defined in (\ref{eq:beta})
for admissible $n$.
Similar problem has been previously investigated in \cite{GorNev1,GorNev2}.
In particular, the following theorem can be proved by
adopting the methods of \cite[Th.~4.5]{GorNev2}.

\begin{thm}\label{prop:QMET0} For every $\eta>0$ and $f\in L^2(Y_{n,q})$,
\begin{align*}
\left\|\pi_{Y_{n,q}}(\beta_{n,x,\epsilon})f-\int_{Y_{n,q}}f\, d\mu_{Y_{n,q}}\right\|_{2}
\ll_{n,\eta} \,
m_{G^{\rm f}_n}(B_n^{\rm f})^{-(2\iota({\rm G}))^{-1}+\eta}\|f\|_{2},
 \end{align*}
where the implied constant depends only on the set of prime divisors of $n$.
\end{thm}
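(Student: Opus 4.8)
The plan is to follow the method of \cite[Theorem~4.5]{GorNev2}, keeping careful track of the dependence of all constants on $n$ and $q$. First I would reduce to the restriction to $L^2_0$: for $f\in L^2(Y_{n,q})$ write $f=f_0+\int_{Y_{n,q}}f\,d\mu_{Y_{n,q}}$ with $f_0\in L^2_0(Y_{n,q})$; since $\beta_{n,x,\epsilon}$ is a probability measure, $\pi_{Y_{n,q}}(\beta_{n,x,\epsilon})$ fixes the constants, so the left-hand side of the theorem equals $\|\pi^0_{Y_{n,q}}(\beta_{n,x,\epsilon})f_0\|_2\le\|\pi^0_{Y_{n,q}}(\beta_{n,x,\epsilon})\|\,\|f\|_2$, and it suffices to bound the operator norm $\|\pi^0_{Y_{n,q}}(\beta_{n,x,\epsilon})\|$. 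Now $G_n={\rm G}(\R)\times\prod_{p\mid n}{\rm G}(\Q_p)$, and $\beta_{n,x,\epsilon}$ is the product, over $p\in\{\infty\}\cup\{p:p\mid n\}$, of the uniform probability measures $\beta_\infty$ on $B^\infty(x,\epsilon)$ and $\beta_p$ on $B_{n,p}$; hence $\pi^0_{Y_{n,q}}(\beta_{n,x,\epsilon})$ is the product of the pairwise commuting operators $\pi^0_{Y_{n,q},\infty}(\beta_\infty)$ and $\pi^0_{Y_{n,q},p}(\beta_p)$ ($p\mid n$), so that
\[
\big\|\pi^0_{Y_{n,q}}(\beta_{n,x,\epsilon})\big\|\ \le\ \big\|\pi^0_{Y_{n,q},\infty}(\beta_\infty)\big\|\prod_{p\mid n}\big\|\pi^0_{Y_{n,q},p}(\beta_p)\big\|\ \le\ \prod_{p\mid n}\big\|\pi^0_{Y_{n,q},p}(\beta_p)\big\|,
\]
the archimedean factor having norm at most $1$. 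Since $m_{G^{\rm f}_n}(B_n^{\rm f})=\prod_{p\mid n}m_{{\rm G}(\Q_p)}(B_{n,p})$ and the target exponent does not depend on $p$, the theorem will follow once I show, for each $p\mid n$,
\[
\big\|\pi^0_{Y_{n,q},p}(\beta_p)\big\|\ \ll_{p,\eta}\ m_{{\rm G}(\Q_p)}(B_{n,p})^{-(2\iota({\rm G}))^{-1}+\eta},
\]
with implied constant independent of $n$ and $q$; multiplying these over $p\mid n$ then gives a constant that is a finite product of $p$-local quantities, hence one depending only on the set of prime divisors of $n$.

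Fix $p\mid n$. The point to exploit is that $B_{n,p}$ is bi-invariant under the compact open subgroup ${\rm G}(\Z_p)$, so the density $\phi_p:=m_{{\rm G}(\Q_p)}(B_{n,p})^{-1}\mathbf{1}_{B_{n,p}}$ of $\beta_p$ is bi-${\rm G}(\Z_p)$-invariant and $\pi^0_{Y_{n,q},p}(\beta_p)=P\,\pi^0_{Y_{n,q},p}(\beta_p)\,P$, where $P$ is the orthogonal projection of $L^2_0(Y_{n,q})$ onto the subspace of ${\rm G}(\Z_p)$-fixed vectors. Thus $\|\pi^0_{Y_{n,q},p}(\beta_p)\|$ is the supremum of $|\langle\pi^0_{Y_{n,q},p}(\beta_p)v,w\rangle|$ over ${\rm G}(\Z_p)$-fixed unit vectors $v,w$, which for all but finitely many $p$ are spherical ($U_p$-fixed, $U_p={\rm G}(\Z_p)$). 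I would then invoke the uniform $L^{r({\rm G})+}$-integrability of the $\pi^0_{Y_{n,q},p}$ given by property $(\tau)$ \cite{cl}, together with the Cowling--Haagerup--Howe domination of matrix coefficients (and its $p$-adic analogue) --- applicable because $\iota({\rm G})$ is an integer with $2\iota({\rm G})\ge r({\rm G})$ --- to obtain, for spherical $v,w$,
\[
\big|\langle\pi^0_{Y_{n,q},p}(g)v,w\rangle\big|\ \le\ C_p\,\Xi_{{\rm G}(\Q_p)}(g)^{1/\iota({\rm G})}\,\|v\|\,\|w\|,\qquad g\in{\rm G}(\Q_p),
\]
with $\Xi_{{\rm G}(\Q_p)}$ the Harish--Chandra spherical function of ${\rm G}(\Q_p)$ and $C_p$ depending only on $p$ ($C_p=1$ whenever $U_p={\rm G}(\Z_p)$; for the finitely many remaining $p$ one replaces $U_p$ by a maximal compact containing ${\rm G}(\Z_p)$ and folds the finite-index correction into $C_p$). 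This bound is uniform in $n$ and $q$ because $r({\rm G})$, hence $\iota({\rm G})$, is.

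To finish for a fixed $p$, I would write $\langle\pi^0_{Y_{n,q},p}(\beta_p)v,w\rangle=\int_{{\rm G}(\Q_p)}\langle\pi^0_{Y_{n,q},p}(g)v,w\rangle\,\phi_p(g)\,dm_{{\rm G}(\Q_p)}(g)$ and apply Hölder's inequality with exponents $2\iota({\rm G})+\delta$ and its conjugate, for small $\delta>0$; using the matrix-coefficient bound and the explicit form of $\phi_p$ this gives
\[
\big|\langle\pi^0_{Y_{n,q},p}(\beta_p)v,w\rangle\big|\ \le\ C_p\Big(\int_{{\rm G}(\Q_p)}\Xi_{{\rm G}(\Q_p)}^{\,2+\delta/\iota({\rm G})}\,dm_{{\rm G}(\Q_p)}\Big)^{\frac{1}{2\iota({\rm G})+\delta}}\, m_{{\rm G}(\Q_p)}(B_{n,p})^{-\frac{1}{2\iota({\rm G})+\delta}}.
\]
The integral is finite, by the $L^{2+}$-integrability of the spherical function (Macdonald's formula; temperedness of the base spherical series), and depends only on $p$ and $\delta$. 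Since $m_{{\rm G}(\Q_p)}(B_{n,p})\ge 1$, choosing $\delta$ small in terms of $\eta$ (and $\iota({\rm G})$, which is fixed) turns $m_{{\rm G}(\Q_p)}(B_{n,p})^{-(2\iota({\rm G})+\delta)^{-1}}$ into at most $m_{{\rm G}(\Q_p)}(B_{n,p})^{-(2\iota({\rm G}))^{-1}+\eta}$, which is the required per-prime estimate; the product over $p\mid n$ completes the argument.

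The hard part is the uniformity in $n$ and $q$, and this is where a verbatim adaptation of \cite[Theorem~4.5]{GorNev2} is not enough: one has to verify that the only dependence of the implied constant is through the finite product of the purely $p$-local quantities $C_p$ and $\big(\int\Xi_{{\rm G}(\Q_p)}^{\,2+\delta/\iota({\rm G})}\big)^{1/(2\iota({\rm G})+\delta)}$, none of which involve the exponents $\alpha_p$, the level $q$, or $x,\epsilon$, so that the total constant depends only on $\{p:p\mid n\}$ and $\eta$ --- exactly the uniformity needed in the sieving of Section~\ref{sec:end}. It is also essential that one Hölder against $\phi_p$ \emph{directly} rather than against a convolution power of $\beta_p$: the latter spreads the support without improving the sup norm of the density and would lose a factor in the exponent. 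A secondary subtlety is upgrading the $L^{r({\rm G})+}$-integrability of a dense family of matrix coefficients to the pointwise $\Xi^{1/\iota({\rm G})}$-domination for the spherical vectors that actually occur, uniformly over $n,q$; this rests on Cowling--Haagerup--Howe-type theorems and the $(n,q,p)$-independence of $r({\rm G})$. The remaining ingredients --- the product decomposition of $\beta_{n,x,\epsilon}$, Hölder's inequality, and finiteness of $\int\Xi^{2+}$ --- are routine, although the edge case $r({\rm G})=2$ (where $\iota({\rm G})=1$ and $\pi^0_{Y_{n,q},p}$ is tempered) and the non-hyperspecial places should be checked separately.
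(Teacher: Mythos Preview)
Your high-level structure matches the paper exactly: reduce to bounding $\|\pi^0_{Y_{n,q}}(\beta_{n,x,\epsilon})\|$, factor $\beta_{n,x,\epsilon}$ as a product of local probability measures, use commutativity to write the operator as a product, drop the archimedean factor (norm $\le 1$), and reduce to a per-prime estimate $\|\pi^0_{Y_{n,q},p}(\beta_{n,p})\|\ll_{p,\eta}m_{{\rm G}(\Q_p)}(B_{n,p})^{-(2\iota)^{-1}+\eta}$, whose product over $p\mid n$ yields a constant depending only on the set of prime divisors.

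Where you diverge is in the per-prime bound itself. The paper simply cites \cite[Theorem~4.5]{GorNev2} and remarks that the argument behind it is Nevo's spectral transfer principle combined with the spherical Kunze--Stein inequality: one first bounds $\|\pi^0_{Y_{n,q},p}(\beta_{n,p})\|\le\|(\pi^0_{Y_{n,q},p})^{\otimes\iota}(\beta_{n,p})\|^{1/\iota}$, then uses weak containment of the tensor power in the regular representation to pass to $\|\lambda_{{\rm G}(\Q_p)}(\beta_{n,p})\|$, and finally estimates the latter by Herz' majoration, producing a constant of the form $\|\Xi_p\|_t^{1/\iota}$. Your route is instead a direct matrix-coefficient argument: bi-${\rm G}(\Z_p)$-invariance of $B_{n,p}$ reduces to ${\rm G}(\Z_p)$-fixed (hence, for almost all $p$, spherical) vectors, for which the tensor-power/CHH argument gives the pointwise domination $|\langle\pi^0(g)v,w\rangle|\le\Xi_p(g)^{1/\iota}$, and then H\"older against the density $\phi_p$ converts this into the desired volume bound, with constant $(\int\Xi_p^{\,2+\delta/\iota})^{1/(2\iota+\delta)}$. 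Both arguments ultimately rest on the same two inputs --- the uniform integrability exponent $r({\rm G})$ and the $L^{2+}$ integrability of $\Xi_p$ --- and yield the same exponent $(2\iota)^{-1}$; your version is perhaps more elementary (no operator-norm transfer, no Kunze--Stein), while the paper's operator-theoretic route is what gets refined in the subsequent uniform theorem to make the constants $\prod_p$-convergent. One caution: your handling of the primes with $U_p\ne{\rm G}(\Z_p)$ is a bit sketchy (a maximal compact containing ${\rm G}(\Z_p)$ need not be special, which is what the Harish--Chandra function requires); the paper sidesteps this by enlarging $B_{n,p}$ to $U_pB_{n,p}U_p$ at a cost absorbed into the $p$-local constant, and you could do the same.
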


\begin{proof}
The statement of the theorem is equivalent to the estimate
\begin{align*}
\left\|\pi^0_{Y_{n,q}}(\beta_{n,x,\epsilon})\right\|
\ll_{n,\eta} \,
m_{G^{\rm f}_n}(B_n^{\rm f})^{-(2\iota({\rm G}))^{-1}+\eta},\quad \eta>0.
 \end{align*}
We observe that the probability measure $\beta_{n,x,\epsilon}$ decomposes as a product
$$
\beta_{n,x,\epsilon}=\beta^\infty_{x,\epsilon}\otimes\left( \bigotimes_{p|n}\beta_{n,p}\right),
$$
where 
$\beta_{x,\epsilon}^\infty$ is the uniform probability measure on ${\rm G}(\R)$ supported on $B^\infty(x,\epsilon)$,
and $\beta_{n,p}$'s are the uniform probability measures on ${\rm G}(\Q_p)$ supported on $B_{n,p}$. 
This implies that $\pi^0_{Y_{n,q}}(\beta_{n,x,\epsilon})$ can be written as a product of commuting operators
$$
\pi^0_{Y_{n,q}}(\beta_{n,x,\epsilon})
=\pi^0_{Y_{n,q},\infty}(\beta^\infty_{x,\epsilon})\prod_{p|n}\pi^0_{Y_{n,q},p}(\beta_{n,p}).
$$
Since $\|\pi^0_{Y_{n,q},\infty}(\beta^\infty_{x,\epsilon})\|\le 1$, we obtain
\begin{equation}\label{eq:prod1}
\left\|\pi^0_{Y_{n,q}}(\beta_{n,x,\epsilon})\right\|
\le\prod_{p|n}\left\|\pi^0_{Y_{n,q},p}(\beta_{n,p})\right\|.
\end{equation}

To estimate the norms of $\pi^0_{Y_{n,q},p}(\beta_{n,p})$, we use the argument as 
in \cite[Th.~4.5]{GorNev2}
(which is based on Nevo's spectral transfer principle \cite[Th.~1]{Nev}). 
Explicitly, for real-valued functions $f_1,f_2$ in $L^2_0(Y_{n,q})$,
it follows from the Jensen's inequality that
\begin{align*}
\left< \pi^0_{Y_{n,q},p}(\beta_{n,p})f_1,f_2\right>^{\iota({\rm G})}
&=\left(\int_{{\rm G}(\mathbb{Q}_p)} \left< \pi^0_{Y_{n,q},p}(g)f_1,f_2\right>\, d\beta_{n,p}(g)\right)^{\iota({\rm G})}\\
&\le\int_{{\rm G}(\mathbb{Q}_p)} \left< \pi^0_{Y_{n,q},p}(g)f_1,f_2\right>^{\iota({\rm G})}\, d\beta_{n,p}(g)\\
&=\int_{{\rm G}(\mathbb{Q}_p)} \left< (\pi^0_{Y_{n,q},p})^{\otimes\iota({\rm G})}(g)f_1^{\otimes\iota({\rm G})},f_2^{\otimes\iota({\rm G})}\right>\, d\beta_{n,p}(g)\\
&=\left< (\pi^0_{Y_{n,q},p})^{\otimes\iota({\rm G})}(\beta_{n,p})f_1^{\otimes\iota({\rm G})},f_2^{\otimes\iota({\rm G})}\right>\\
&\le \left\| (\pi^0_{Y_{n,q},p})^{\otimes\iota({\rm G})}(\beta_{n,p})\right\| \|f_1\|^{\otimes\iota({\rm G})}\|f_2\|^{\otimes\iota({\rm G})}.
\end{align*}
Hence,
\begin{equation}\label{eq:nevo}
\left\|\pi^0_{Y_{n,q},p}(\beta_{n,p})\right\|\le \left\| (\pi^0_{Y_{n,q},p})^{\otimes\iota({\rm G})}(\beta_{n,p})\right\|^{1/\iota({\rm G})}.
\end{equation}
Since $\iota({\rm G})\ge r({\rm G})/2$, the representation 
$(\pi^0_{Y_{n,q},p})^{\otimes\iota({\rm G})}$ is $L^{2+}$ integrable, and by \cite[Th.~1]{chh},
\begin{equation}\label{eq:nevo1}
\left\| (\pi^0_{Y_{n,q},p})^{\otimes\iota({\rm G})}(\beta_{n,p})\right\|\le 
\left\| \lambda_{{\rm G}(\mathbb{Q}_p)}(\beta_{n,p}) \right\|,
\end{equation}
where $\lambda_{{\rm G}(\mathbb{Q}_p)}$ denotes the regular representation of 
${\rm G}(\mathbb{Q}_p)$ on $L^2({\rm G}(\mathbb{Q}_p))$.
By the Kunze--Stein inequality, for every $f\in L^2({\rm G}(\mathbb{Q}_p))$ and $q\in [1,2)$,
$$
\|\beta_{n,p}*f\|_2\ll_{p,q} \|\beta_{n,p}\|_q\|f\|_2=m_{{\rm G}(\mathbb{Q}_p)}(B_{n,q})^{1/q-1}\|f\|_2.
$$
For semisimple simply connected groups over non-Archimedean fields, this inequality was proven in
\cite{v}. Then it follows that
$$
\left\| \lambda_{{\rm G}(\mathbb{Q}_p)}(\beta_{n,p}) \right\|\ll_{p,\eta} m_{{\rm
    G}(\mathbb{Q}_p)}(B_{n,q})^{-1/2+\eta},\quad \eta>0,
$$
and
\begin{equation}\label{eq:prod2_0}
\left\|\pi^0_{Y_{n,q},p}(\beta_{n,p})\right\|\ll_{p,\eta}\, m_{{\rm G}(\Q_p)}(B_{n,p})^{-(2\iota({\rm G}))^{-1}+\eta},\quad
\eta>0,
\end{equation}
which completes the proof of the theorem.
\end{proof}

We note that the crucial estimate (\ref{eq:prod2_0}) can be deduced from
the spherical Kunze--Stein inequality 
(see, for instance, \cite[Prop.~5.9]{GorNev1}), and the implied constant in (\ref{eq:prod2_0})
can be estimated explicitly, but unfortunately it blows up as $p\to \infty$.

For our purposes we need the following uniform version of Theorem \ref{prop:QMET0}.

\begin{thm}\label{prop:QMET} For every $\eta>0$ and $f\in L^2(Y_{n,q})$,
\begin{align*}
\left\|\pi_{Y_{n,q}}(\beta_{n,x,\epsilon})f-\int_{Y_{n,q}}f\, d\mu_{Y_{n,q}}\right\|_{2}\ll_\eta\,
m_{G^{\rm f}_n}(B_n^{\rm f})^{-(4\iota({\rm G}))^{-1}+\eta}\|f\|_{2}.
 \end{align*}
\end{thm}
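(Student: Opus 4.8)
The plan is to deduce Theorem~\ref{prop:QMET} from the non-uniform estimate in Theorem~\ref{prop:QMET0} by interpolating between the trivial bound $\|\pi^0_{Y_{n,q},p}(\beta_{n,p})\|\le 1$ (valid for \emph{all} primes $p\mid n$ with a uniform constant $1$) and the strong bound \eqref{eq:prod2_0} (valid for a fixed prime $p$ but with a constant that blows up as $p\to\infty$). The key observation is that the explicit constant $C(p,\eta)$ appearing in \eqref{eq:prod2_0}, while unbounded in $p$, grows only polynomially: tracing through Nevo's transfer principle and the spherical Kunze--Stein inequality, one gets $C(p,\eta)\ll_\eta p^{A}$ for some absolute exponent $A=A(\mathrm{G})$ depending only on the root datum of $\mathrm G$ (roughly, on the number of positive roots and the dimension). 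Write this out as the first step.

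Next I would split the prime divisors of $n$ into ``small'' and ``large'' ones. Fix a threshold $P$ to be optimized at the end and set $S_{\mathrm{small}}=\{p\mid n:\ p\le P\}$, $S_{\mathrm{large}}=\{p\mid n:\ p> P\}$. For $p\in S_{\mathrm{small}}$ apply \eqref{eq:prod2_0}; since there are at most $P$ such primes and each contributes a factor $\ll_\eta p^A m_{{\rm G}(\Q_p)}(B_{n,p})^{-(2\iota)^{-1}+\eta}$, the product of the constants over $S_{\mathrm{small}}$ is $\ll_\eta \big(\prod_{p\le P}p\big)^A\le e^{AP(1+o(1))}$ by the prime number theorem, which is bounded once $P$ is fixed. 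For $p\in S_{\mathrm{large}}$ apply the trivial bound $\|\pi^0_{Y_{n,q},p}(\beta_{n,p})\|\le 1$. Then from \eqref{eq:prod1},
\[
\big\|\pi^0_{Y_{n,q}}(\beta_{n,x,\epsilon})\big\|
\ll_{\eta,P}\ \prod_{p\in S_{\mathrm{small}}} m_{{\rm G}(\Q_p)}(B_{n,p})^{-(2\iota(\mathrm G))^{-1}+\eta}.
\]
This is uniform in $n$ and $q$ but only exploits the small primes, so I still need to compare $\prod_{p\in S_{\mathrm{small}}}m_{{\rm G}(\Q_p)}(B_{n,p})$ with the full product $m_{G^{\mathrm f}_n}(B_n^{\mathrm f})=\prod_{p\mid n}m_{{\rm G}(\Q_p)}(B_{n,p})$.

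The comparison is where the volume estimates of Section~\ref{sec:vol} enter: there one has two-sided bounds of the shape $m_{{\rm G}(\Q_p)}(B_{n,p})\asymp p^{(\dim\mathrm G-\operatorname{rk}\mathrm G)\alpha_p}$ (up to factors polynomial in $p$ and in $\alpha_p$), where $n=\prod p^{\alpha_p}$; in particular $\log m_{{\rm G}(\Q_p)}(B_{n,p})\asymp \alpha_p\log p$, so $\sum_p \log m_{{\rm G}(\Q_p)}(B_{n,p})\asymp \log n$ with absolute implied constants. The point is then a purely elementary one: for the large primes $p>P$ one has $\alpha_p\log p\le \log n$, and the number of large prime divisors is $O(\log n/\log P)$, so their total contribution $\sum_{p\in S_{\mathrm{large}}}\log m_{{\rm G}(\Q_p)}(B_{n,p})$ is at most a $c/\log P$ fraction of $\log m_{G^{\mathrm f}_n}(B_n^{\mathrm f})$ for some absolute $c$; equivalently $\sum_{p\in S_{\mathrm{small}}}\log m_{{\rm G}(\Q_p)}(B_{n,p})\ge (1-c/\log P)\log m_{G^{\mathrm f}_n}(B_n^{\mathrm f})$. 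Choosing $P$ a large enough absolute constant so that $1-c/\log P\ge \tfrac12$, we get $\prod_{p\in S_{\mathrm{small}}}m_{{\rm G}(\Q_p)}(B_{n,p})\ge m_{G^{\mathrm f}_n}(B_n^{\mathrm f})^{1/2}$, and combining with the displayed bound yields
\[
\big\|\pi^0_{Y_{n,q}}(\beta_{n,x,\epsilon})\big\|\ll_\eta m_{G^{\mathrm f}_n}(B_n^{\mathrm f})^{-(4\iota(\mathrm G))^{-1}+\eta},
\]
which is exactly the assertion of Theorem~\ref{prop:QMET} (after renaming $\eta$). Finally, since $\pi^0_{Y_{n,q}}(\beta_{n,x,\epsilon})$ is the restriction of $\pi_{Y_{n,q}}(\beta_{n,x,\epsilon})$ to $L^2_0(Y_{n,q})$ and the orthogonal complement consists of constants on which the operator acts as the identity, the operator-norm bound is equivalent to the stated approximation inequality for arbitrary $f\in L^2(Y_{n,q})$.

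I expect the main obstacle to be the first step: extracting from the proof of \eqref{eq:prod2_0} (i.e.\ from the transfer-principle/Kunze--Stein machinery of \cite{GorNev2,Nev,c}) an \emph{explicit polynomial} bound $C(p,\eta)\ll_\eta p^{A}$, rather than just ``some constant depending on $p$''. This requires making the dependence on the structure constants of ${\rm G}(\Q_p)$ — the volumes of spherical shells, the number of $U_p$-double cosets in $B_{n,p}$, and the Harish--Chandra spherical function estimates — uniform and polynomially controlled in $p$; the volume estimates of Section~\ref{sec:vol} are tailored precisely for this. The second part, the small/large prime split and the elementary logarithmic counting, is routine once those inputs are in place, and the loss of a factor $2$ in the exponent (from $(2\iota)^{-1}$ to $(4\iota)^{-1}$) is exactly the price of discarding the large primes.
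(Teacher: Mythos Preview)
Your small/large prime split breaks down at the comparison step. The claim that
\[
\sum_{p\in S_{\mathrm{large}}}\log m_{{\rm G}(\Q_p)}(B_{n,p})\ \le\ \frac{c}{\log P}\,\log m_{G^{\mathrm f}_n}(B_n^{\mathrm f})
\]
is simply false in general: take $n=p$ a single prime with $p>P$. Then $S_{\mathrm{small}}=\emptyset$, the left-hand side equals the full sum, and your displayed bound collapses to the trivial estimate $\|\pi^0_{Y_{n,q}}(\beta_{n,x,\epsilon})\|\le 1$, giving no decay whatsoever. More generally, for any fixed $P$, infinitely many $n$ (e.g.\ all primes larger than $P$) have no small prime factors, so the ``small primes carry at least half the log-volume'' heuristic cannot hold uniformly. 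The bound on the \emph{number} of large prime divisors, $O(\log n/\log P)$, combined with the bound $\alpha_p\log p\le \log n$ on each term, only gives $\sum_{p\in S_{\mathrm{large}}}\alpha_p\log p\ll (\log n)^2/\log P$, which is useless. Your polynomial hypothesis $C(p,\eta)\ll_\eta p^A$ does not rescue this either: applied directly over all $p\mid n$ it produces a factor $\prod_{p\mid n}p^A$, which can be as large as $n^A$ and would have to be absorbed into the exponent, with no control on $A$ relative to $a({\rm G})$ and $\iota({\rm G})$.

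The paper's proof avoids splitting altogether and instead shows a \emph{per-prime} bound of the form
\[
\big\|\pi^0_{Y_{n,q},p}(\beta_{n,p})\big\|\ \le\ c_{p,\eta}\,m_{{\rm G}(\Q_p)}(B_{n,p})^{-(4\iota({\rm G}))^{-1}+\eta}
\quad\text{with}\quad \prod_p c_{p,\eta}<\infty.
\]
The mechanism is: after Nevo's transfer and weak containment in the regular representation, one bounds $\|\lambda_{{\rm G}(\Q_p)}(\tilde\beta_{n,p})\|$ by Herz' majoration, obtaining $a_{p,s}\,m_{{\rm G}(\Q_p)}(B_{n,p})^{-1+1/s}$ with $a_{p,s}=\|\Xi_p\|_t$, $t=(1-1/s)^{-1}$. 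The point is not that $a_{p,s}$ grows polynomially, but that for $t>4$ (equivalently $s<4/3$) the Euler product $\prod_p a_{p,s}$ \emph{converges}; this is \cite[Prop.~6.3]{GorNev2}. Restricting $s$ to $(1,4/3)$ is exactly what degrades the exponent from $(2\iota)^{-1}$ to $(4\iota)^{-1}$, so the factor $2$ you lose has the same origin as in the paper, but the paper gets it by weakening the Kunze--Stein exponent rather than by discarding primes.
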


\begin{proof}
As in the above proof, we need to estimate 
$\|\pi^0_{Y_{n,q}}(\beta_{n,x,\epsilon})\|$, and 
because of (\ref{eq:prod1}), it is sufficient to give a bound on the norms of 
$\pi^0_{Y_{n,q},p}(\beta_{n,p})$. We claim that
\begin{equation}\label{eq:prod2}
\left\|\pi^0_{Y_{n,q},p}(\beta_{n,p})\right\|\le c_{p,\eta}\, m_{{\rm G}(\Q_p)}(B_{n,p})^{-(4\iota({\rm G}))^{-1}+\eta},\quad
\eta>0,
\end{equation}
where the constant $c_{p,\eta}\ge 1$ satisfies 
$$
\prod_p c_{p,\eta}<\infty.
$$
Clearly, (\ref{eq:prod1}) combined with (\ref{eq:prod2}) implies the theorem.
 
To prove (\ref{eq:prod2}), we use that by the estimates \eqref{eq:nevo}--\eqref{eq:nevo1}, 
\begin{equation}\label{eq:est}
\left\|\pi^0_{Y_{n,q},p}(\beta_{n,p})\right\|\le 
\left\|\lambda_{{\rm G}(\Q_p)}(\beta_{n,p})\right\|^{1/\iota({\rm G})}.
\end{equation}

Let $\tilde B_{n,p} :=U_pB_{n,p} U_p$ and $\tilde \beta_{n,p}$ be the uniform probability measure
supported on $\tilde B_{n,p}$. 
Recall that for almost all $p$, we have $U_p={\rm G}(\Z_p)$.
For those $p$, we have $\tilde B_{n,p}=B_{n,p}$ and
\begin{equation}\label{eq:l0}
\left\|\lambda_{{\rm G}(\Q_p)}(\beta_{n,p})\right\|=
\left\|\lambda_{{\rm G}(\Q_p)}(\tilde\beta_{n,p})\right\|.
\end{equation}
To deal with the remaining finite set of primes, we observe that
$B_{n,p}\subset \tilde B_{n,p}$, and hence for every $f\in L^2({\rm G}(\Q_p))$,
\begin{align*}
\left\|\lambda_{{\rm G}(\Q_p)}(\beta_{n,p})f\right\|&\le 
\left\|\lambda_{{\rm G}(\Q_p)}(\beta_{n,p})|f|\right\| \\
&\le \frac{m_{{\rm G}(\Q_p)}(\tilde B_{n,p})}{m_{{\rm G}(\Q_p)}(B_{n,p})}\cdot
\left\|\lambda_{{\rm G}(\Q_p)}(\tilde \beta_{n,p})|f|\right\|.
\end{align*}
Hence,
\begin{equation}\label{eq:l}
\left\|\lambda_{{\rm G}(\Q_p)}(\beta_{n,p})\right\|\le 
\frac{m_{{\rm G}(\Q_p)}(\tilde B_{n,p})}{m_{{\rm G}(\Q_p)}(B_{n,p})}\cdot
\left\|\lambda_{{\rm G}(\Q_p)}(\tilde \beta_{n,p})\right\|.
\end{equation}
Since $U_p$ and ${\rm G}(\Z_p)$ are both compact open subgroups in ${\rm G}(\Q_p)$,
they are commensurable, and it follows that for some $c_p\ge 1$,
$$
m_{{\rm G}(\Q_p)}(\tilde B_{n,p})\le c_p\, m_{{\rm G}(\Q_p)}(B_{n,p}).
$$
Hence, it follows from (\ref{eq:l0}) and (\ref{eq:l}) that
\begin{align}\label{eq:est3}
\left\|\lambda_{{\rm G}(\Q_p)}(\beta_{n,p})\right\|\le c_p
\left\|\lambda_{{\rm G}(\Q_p)}(\tilde \beta_{n,p})\right\|,
\end{align}
where $c_p=1$ for almost all $p$.

Since $\tilde \beta_{n,p}$ is bi-invariant under $U_p$,
we can estimate the norm 
$\left\|\lambda_{{\rm G}(\Q_p)}(\tilde\beta_{n,p})\right\|$ using Herz' majoration argument
(see \cite[Prop.~5.9]{GorNev1}). 
Indeed, since $U_p$'s are special subgroups, it follows from the structure theory \cite[3.3.2]{tits}
that there exists a closed amenable subgroup $Q_p$ such that 
\begin{equation}\label{eq:iwasawa}
{\rm G}(\Q_p)=U_pQ_p,
\end{equation}
i.e.,
${\rm G}(\Q_p)$ is an Iwasawa group in the sense of \cite[Def.~5.1(1)]{GorNev1}. 
By \cite[Prop.~5.9]{GorNev1}, 

\begin{align*}\label{eq:csp}
\|\lambda_{{\rm G}(\Q_p)}(\tilde\beta_{n,p})\|
=\int_{{\rm G}(\mathbb{Q}_p)} \Xi_p\, d\beta_{n,p},
\end{align*}
where $\Xi_p$
is the Harish-Chandra function
$\Xi_p$ (see \cite[Def.~5.1(2)]{GorNev1}).
We recall that $\Xi_p$ belongs to $L^{2+\epsilon}({\rm G}(\mathbb{Q}_p))$ for every $\epsilon>0$,
so that by H\"older inequality, for every $s\in [1,2)$ and $t=(1-1/s)^{-1}$,
\begin{equation}\label{eq:csp}
\|\lambda_{{\rm G}(\Q_p)}(\tilde\beta_{n,p})\|
\le \|\Xi_p\|_{t}\cdot\|\tilde\beta_{n,p}\|_s  
= \|\Xi_p\|_{t} \cdot m_{{\rm G}(\Q_p)}(\tilde B_{n,p})^{-1+1/s}.
\end{equation}
We set $a_{p,s}=\|\Xi_p\|_{t}$.

To estimate $a_{p,s}$, we recall the explicit formula for the Harish-Chandra function.
Let $\Delta_{p}$ denote the modular function on the group $Q_p$.
For $g\in {\rm G}(\Q_p)$, we denote by $q(g)$ the $Q_p$-component of $g$ with 
respect to the decomposition (\ref{eq:iwasawa}).
The Harish-Chandra function on ${\rm G}(\Q_p)$ is defined by
$$
\Xi_p(g)=\int_{U_p}\Delta_{p}(q(gu))^{-1/2}\, dm_{{\rm G}(\Q_p)}(u).
$$
We have
$$
a_{p,s}\ge \Xi_p(e) m_{{\rm G}(\Q_p)}(U_p)^{1/t}=1,
$$
and by \cite[Prop.~6.3]{GorNev2}, when $t>4$ (that is, when $s<4/3$),
$$
\prod_p a_{p,s}<\infty.
$$ 
Combining (\ref{eq:est}), (\ref{eq:est3}), and (\ref{eq:csp}), we conclude that 
$$
\left\|\pi^0_{Y_{n,q},p}(\beta_{n,p})\right\|\le (a_{p,s}c_p)^{1/\iota({\rm G})}\,m_{{\rm G}(\Q_p)}(
B_{n,p})^{(-1+1/s)/\iota({\rm G})},
$$
where 
$$
\prod_p (a_{p,s}c_p)^{1/\iota({\rm G})}<\infty
$$
for $s<4/3$.
This implies (\ref{eq:prod2}) and completes the proof of the theorem.
\end{proof}

\section{Volume estimates}\label{sec:vol}

This section plays auxiliary role and can be skipped for the first reading.
Here we prove uniform estimates for the volumes of the sets $Y_{n}$ and the sets $B_n^{\rm f}$.

\begin{prop} \label{l:vol}
$$
\inf_{n\in \N} m_{Y_{n}}(Y_{n})>0\quad\hbox{and}\quad
\sup_{n\in \N} m_{Y_{n}}(Y_{n})<\infty.
$$
\end{prop}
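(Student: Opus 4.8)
The plan is to compute $m_{Y_n}(Y_n) = m_{G_n}(G_n/\Gamma_n)$ by factoring the covolume over the archimedean and non-archimedean places and then controlling the $p$-adic factors uniformly in $n$. First I would fix, once and for all, a Haar measure on the adelic group ${\rm G}(\A)$ and recall that, since ${\rm G}$ is simply connected and $\Q$-simple, strong approximation gives
\[
{\rm G}(\A) = {\rm G}(\Q)\cdot\Big({\rm G}(\R)\times \prod_p U_p\Big),
\]
so that $\mathrm{vol}({\rm G}(\Q)\backslash {\rm G}(\A))$ equals the covolume of ${\rm G}(\Z) = {\rm G}(\Q)\cap({\rm G}(\R)\times\prod_p U_p)$ in ${\rm G}(\R)$ with respect to $m_{{\rm G}(\R)}$, a fixed positive finite number. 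Next I would observe that $\Gamma_n = {\rm G}(\Z[1/n])$ sits in $G_n = {\rm G}(\R)\times\prod_{p|n}{\rm G}(\Q_p)$ and that, again by strong approximation applied at the places dividing $n$,
\[
G_n = \Gamma_n\cdot\Big({\rm G}(\R)\times\prod_{p|n}U_p\Big),
\]
whence a standard folding/unfolding argument (comparing the covolume of $\Gamma_n$ in $G_n$ with that of ${\rm G}(\Z)$ in ${\rm G}(\R)$) yields
\[
m_{Y_n}(Y_n) = m_{Y_\infty}(Y_\infty)\cdot\prod_{p|n} m_{{\rm G}(\Q_p)}(U_p) = m_{Y_\infty}(Y_\infty),
\]
because of the normalization $m_{{\rm G}(\Q_p)}(U_p)=1$ chosen in Section \ref{sec:notation}. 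If the measures line up this cleanly the proposition is immediate with the infimum and supremum both equal to the single constant $m_{Y_\infty}(Y_\infty)\in(0,\infty)$.

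The point where care is genuinely required is the bookkeeping of the Haar measure normalizations: the measure $m_{G_n}$ is defined as a product of local measures, each local $p$-adic measure normalized by $m_{{\rm G}(\Q_p)}(U_p)=1$, but the map $G_n/\Gamma_n \to {\rm G}(\R)/{\rm G}(\Z)$ (or the appropriate fiber-bundle picture $Y_n \to {\rm G}(\R)/{\rm G}(\Z)$ with fibers $\prod_{p|n}U_p$) is only measure-preserving up to the index $[\,{\rm G}(\Z) : {\rm G}(\Z)\cap \Gamma_n(\text{something})\,]$ and up to the class number / connected-component issues that strong approximation is invoked to kill. So the hard part is not any estimate — there is nothing to estimate once the identity is established — but rather verifying that strong approximation applies verbatim (${\rm G}$ simply connected, $\Q$-simple, and isotropic at each $p|n$, which is automatic here since we only need isotropy at $\infty$ for the relevant form of strong approximation, or alternatively isotropy over $\Q$ in the setting of Theorem \ref{thm:main_short}) and that the fibration $Y_n\to Y_\infty$ has fibers isomorphic as measure spaces to $\prod_{p|n}U_p$ with total mass $1$.

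Concretely I would organize the proof as: (1) recall strong approximation and deduce the decomposition $G_n = \Gamma_n\cdot K_n$ with $K_n := {\rm G}(\R)\times\prod_{p|n}U_p$; (2) check $\Gamma_n\cap K_n = {\rm G}(\Z)$ (more precisely, its image in ${\rm G}(\R)$, using that an element of ${\rm G}(\Z[1/n])$ lying in $U_p = {\rm G}(\Z_p)$ for all $p|n$ is $p$-integral everywhere, hence integral); (3) conclude $Y_n = G_n/\Gamma_n$ fibers over ${\rm G}(\R)/{\rm G}(\Z) = Y_\infty$ with fiber $\prod_{p|n}U_p$; (4) integrate, using the Fubini/quotient-measure formula and the normalization $m_{{\rm G}(\Q_p)}(U_p)=1$, to get $m_{Y_n}(Y_n) = m_{Y_\infty}(Y_\infty)$ for every $n$; (5) finally note $0 < m_{Y_\infty}(Y_\infty) < \infty$ because $\Gamma_\infty = {\rm G}(\Z)$ is a lattice in ${\rm G}(\R)$ (cf. \cite[\S5.4]{pr}). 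This gives both the infimum bound $\inf_n m_{Y_n}(Y_n) = m_{Y_\infty}(Y_\infty) > 0$ and the supremum bound $\sup_n m_{Y_n}(Y_n) = m_{Y_\infty}(Y_\infty) < \infty$ simultaneously. I would expect step (2), the identification $\Gamma_n\cap K_n = {\rm G}(\Z)$, to be the only place where a subtlety could hide (e.g. if ${\rm G}$ fails to be simply connected one picks up a finite but possibly unbounded group of components), but under the simply-connected hypothesis this is clean.
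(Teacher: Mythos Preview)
Your approach is genuinely different from the paper's and conceptually cleaner: you aim for an (essentially) exact identity $m_{Y_n}(Y_n)=m_{Y_\infty}(Y_\infty)$ via a fibration $Y_n\to Y_\infty$ with fibre $\prod_{p|n}U_p$, whereas the paper argues the two bounds separately --- an explicit injection of a fixed small set $\mathcal O^\infty\times\prod_{p|n}{\rm G}(\Z_p)$ into $Y_n$ for the lower bound, and for the upper bound a fundamental-set construction based on strong approximation from a \emph{finite} prime $p_0$ at which $\rm G$ is isotropic.

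There is, however, a real gap in your argument. Your decomposition $G_n=\Gamma_n\cdot K_n$ amounts to the statement that ${\rm G}(\Z[1/n])$ is dense in $\prod_{p|n}{\rm G}(\Q_p)$, which is strong approximation with $S=\{\infty\}$ and therefore needs ${\rm G}(\R)$ non-compact. Proposition~\ref{l:vol} is stated (and used, via Theorem~\ref{thm:count}) in the generality of Section~\ref{sec:notation}, where no $\R$-isotropy is assumed; in particular Theorem~\ref{thm:main_short2} covers groups like ${\rm G}={\rm Spin}(f)$ with $f$ positive definite in $\ge 5$ variables, for which ${\rm G}(\R)$ is compact while ${\rm G}(\Q_p)$ is non-compact for every $p$. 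In that case your fibration $Y_n\to Y_\infty$ simply does not exist as stated (indeed $Y_\infty={\rm G}(\R)/{\rm G}(\Z)$ is then a finite set), and the argument collapses. The paper avoids this by choosing, once and for all, a prime $p_0$ with ${\rm G}(\Q_{p_0})$ non-compact (such $p_0$ always exists by \cite[Th.~6.7]{pr}) and running strong approximation from $\{\infty,p_0\}$; this yields $G_n=\Omega_{p_0}\,\mathcal O_{n_0}^{\rm f}\,\Gamma_n$ and hence the upper bound, with a separate reduction for the case $p_0\nmid n$.

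A secondary inaccuracy: your step~(2) asserts $\Gamma_n\cap K_n={\rm G}(\Z)$, invoking $U_p={\rm G}(\Z_p)$, but the paper only has $U_p={\rm G}(\Z_p)$ for \emph{almost} all $p$. For the finitely many bad primes the intersection $\Gamma_n\cap K_n$ is merely commensurable with ${\rm G}(\Z)$, and your exact equality $m_{Y_n}(Y_n)=m_{Y_\infty}(Y_\infty)$ becomes an equality up to a factor $\prod_{p|n}m_{{\rm G}(\Q_p)}({\rm G}(\Z_p))$ (and an index correction), which is uniformly bounded above and below. This does not affect the conclusion but the claimed identity is not literally true. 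If you additionally assume $\rm G$ isotropic over $\R$ (as in Theorem~\ref{thm:main_short}, since $\Q$-isotropic implies $\R$-isotropic), your argument is correct with this adjustment and yields the stronger statement that $m_{Y_n}(Y_n)$ takes only finitely many values; but to cover Theorem~\ref{thm:main_short2} you must replace the archimedean place by a suitable finite $p_0$, which is exactly what the paper does.
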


\begin{proof}
For $n\in \N$ we define
\begin{equation}\label{eq:o}
\mathcal O_n^{\rm f}:=\prod_{p | n} {\rm G}(\Z_p),
\end{equation}
which is a compact open subgroup of $G_n^{\rm f}$.

To prove the first claim, we fix a sufficiently small open subset $\mathcal{O}^\infty$ of ${\rm G}(\R)$
and set $\mathcal{O}_n:=\mathcal{O}^\infty\times \mathcal{O}_n^{\rm f}$.
We claim that $\mathcal{O}_n$ injects into $Y_{n}=G_n/\Gamma_n$ under the projection map
$G_n\to G_n/\Gamma_n$.
Indeed, if for some $\gamma\in\Gamma_n$, we have $\mathcal{O}_n\gamma\cap \mathcal{O}_n\ne\emptyset$,
then it follows that $\gamma\in {\rm G}(\Z)$. Therefore, it is sufficient to take $\mathcal{O}^\infty$ which
injects into ${\rm G}(\R)/{\rm G}(\Z)$. Then 
$$
m_{Y_{n}}(Y_{n})\ge m_{{\rm G}(\R)}(\mathcal{O}^\infty)\prod_{p|n} m_{{\rm G}(\Q_p)}({\rm G}(\Z_p)).  
$$
Since $m_{{\rm G}(\Q_p)}({\rm G}(\Z_p))=m_{{\rm G}(\Q_p)}(U_p)=1$ for almost all $p$, this proves the first claim.

Now we turn to the proof of the second claim.
We fix a prime $p_0$ such that ${\rm G}$ is isotropic over $\Q_{p_0}$ and $U_{p_0}={\rm G}(\Z_{p_0})$
(such a prime exists by \cite[Th.~6.7]{pr}).
We consider the two separate cases depending on whether $p_0$ divides $n$ or not.

Suppose that $p_0$ divides $n$ and write $n=p_0^\alpha n_0$ with $n_0$ coprime to $p_0$.
We identify $G_{p_0}$ and $G_{n_0}^{\rm f}$ with the corresponding subgroups
of $G_n$, so that 
\begin{equation}\label{eq:decomp}
G_n=G_{p_0}G_{n_0}^{\rm f}.
\end{equation}
Since ${\rm G}$ is isotropic over $\Q_{p_0}$, it follows that $G_{p_0}$ is not compact,
and by the strong approximation property \cite[\S7.4]{pr}, $G_{p_0}\Gamma_n$ is dense in $G_n$.
In particular, it follows that for every $g\in G_n$,
$$
G_{p_0}\Gamma_n\cap O_{n_0}^{\rm f}g\ne \emptyset.
$$
This proves that
$$
G_n=G_{p_0}O^{\rm f}_{n_0}\Gamma_n.
$$
Let $\Omega_{p_0}$ be a measurable fundamental set in $G_{p_0}$ for the right action of $\Gamma_{p_0}$.
Then $G_{p_0}=\Omega_{p_0} \Gamma_{p_0}$, and every element $g\in G_{n}$ can be written 
with respect to the decomposition (\ref{eq:decomp}) as
$$
g=(\omega\delta,o)\cdot (\gamma,\gamma)=(\omega,o\delta^{-1})\cdot (\delta\gamma,\delta\gamma),
$$
where $\omega\in \Omega_{p_0}$, $\delta\in \Gamma_{p_0}$, $o\in O_{n_0}^{\rm f}$, and $\gamma\in \Gamma_n$.
Since $o\delta^{-1}\in O_{n_0}^{\rm f}$ and $\delta\gamma\in \Gamma_n$, this shows that
$$
G_n=\Omega_{p_0} O^{\rm f}_{n_0}\Gamma_n.
$$
Therefore, 
\begin{equation}\label{eq:ll}
m_{Y_{n}}(Y_{n})\le m_{G_n}(\Omega_{p_0} O^{\rm f}_{n_0})=
m_{G_{p_0}}(\Omega_{p_0})\cdot m_{G_{n_0}^{\rm f}}(O_{n_0}^{\rm f}).
\end{equation}
We observe that
$$
m_{G_{n_0}^{\rm f}}(O^{\rm f}_{n_0})=\prod_{p|n_0} m_{{\rm G}(\Q_p)}({\rm G}(\Z_p)).
$$
Since ${\rm G}(\Z_p)$ is compact, $m_{{\rm G}(\Q_p)}({\rm G}(\Z_p))<\infty$.
Moreover, for almost $p$, we have ${\rm G}(\Z_p)=U_p$ and $m_{{\rm G}(\Q_p)}({\rm G}(\Z_p))=1$.
Hence, the second claim of the lemma follows from (\ref{eq:ll}).

Now suppose that $p_0$ does not divide $n$. In this case, we 
identify $G_n$ and ${\rm G}(\Q_{p_0})$ with the corresponding subgroups
of $G_{np_0}$, so that 
\begin{equation}\label{eq:dec2}
G_{np_0}=G_n {\rm G}(\Q_{p_0}).
\end{equation}
Let $\Omega_n$ be a measurable fundamental set in $G_n$ for the right action of $\Gamma_n$.
We claim that the natural projection map 
\begin{equation}\label{eq:map}
\Omega_n\times {\rm G}(\Z_{p_0})\to Y_{np_0}=G_{np_0}/\Gamma_{np_0}
\end{equation}
defined by the decomposition (\ref{eq:dec2}) is one-to-one.
Indeed, suppose that for some $g_1,g_2\in \Omega_n$ and $u_1,u_2\in {\rm G}(\Z_{p_0})$,
there exists $\gamma\in \Gamma_{np_0}$ such that $g_1\gamma=g_2$ and $u_1\gamma=u_2$.
Then 
$$
\gamma=u_1^{-1}u_2\in \Gamma_{n p_0}\cap {\rm G}(\Z_{p_0})=\Gamma_n
$$
and since $\Omega_n$ is a fundamental set, it follows that $\gamma=e$.
This proves the claim.
It is clear the map (\ref{eq:map}) sends the product measure $m_{G_n}\otimes m_{{\rm G}(\Q_{p_0})}$
to the measure $m_{Y_{np_0}}$. Hence, we obtain
$$
m_{G_n}(\Omega_n)m_{{\rm G}(\Q_{p_0})}({\rm
  G}(\Z_{p_0}))\le m_{Y_{np_0}}(Y_{np_0}).
$$
Since it follows from the previous paragraph that the right hand side is uniformly bounded,
we conclude that $m_{Y_n}(Y_n)=m_{G_n}(\Omega_n)$ is uniformly bounded as well.
This completes the proof of the proposition.
\end{proof}

We say that a number $n$ is {\it isotropic} if for every prime divisor $p$ of $n$,
the group $\rm G$ is isotropic over $\Q_p$. In particular, if $\rm G$ is isotropic over $\Q$,
then every number is isotropic.

\begin{prop}\label{p:b_n}
There exists $a>0$ such that for every isotropic admissible number $n$,
$$
m_{G_n^{\rm f}}(B_n^{\rm f})\ge c(n)\, n^{a},
$$
where $c(n)>0$ depends only on the set of prime divisors of $n$.

Moreover, if ${\rm G}$ is split over $\Q$, then $c(n)$ is independent of $n$.
\end{prop}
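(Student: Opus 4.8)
The plan is to reduce the global estimate on $m_{G_n^{\rm f}}(B_n^{\rm f})=\prod_{p|n}m_{{\rm G}(\Q_p)}(B_{n,p})$ to a local lower bound at each prime $p\mid n$, of the form $m_{{\rm G}(\Q_p)}(B_{n,p})\ge c_p\, p^{a\alpha_p}$, where $n=\prod_p p^{\alpha_p}$, $a>0$ is independent of $p$, and $c_p\in(0,1]$ with $c_p=1$ for almost all $p$. Multiplying over $p\mid n$ then gives $m_{G_n^{\rm f}}(B_n^{\rm f})\ge\big(\prod_{p|n}c_p\big)\,n^a$, and setting $c(n):=\prod_{p|n}c_p$ — which depends only on the set of prime divisors of $n$ — yields the claim; when ${\rm G}$ is isotropic over $\Q$, the exceptional set of primes where $c_p<1$ can be absorbed into a single uniform constant (indeed, by Proposition~\ref{l:vol} the analogous product $\prod_p$ of local volume ratios already converges), so $c(n)$ becomes independent of $n$.

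For the local bound, fix $p\mid n$ with $\alpha_p=\alpha$, so $B_{n,p}=\{g\in{\rm G}(\Q_p):\|g\|_p=p^{\alpha}\}$. Since $n$ is isotropic, ${\rm G}$ is isotropic over $\Q_p$, so ${\rm G}(\Q_p)$ contains a one-parameter $\Q_p$-split torus; pick a nontrivial cocharacter $\lambda:\mathbb{G}_m\to{\rm G}$ defined over $\Q_p$, and consider the elements $\lambda(p^{k})$, $k\in\Z$. In the embedding ${\rm G}\subset{\rm GL}_N$, the matrix entries of $\lambda(p^k)$ are monomials $p^{kw_i}$ in the weights $w_i\in\Z$ of the representation, so $\|\lambda(p^k)\|_p=p^{kw_{\max}}$ where $w_{\max}=\max_i w_i>0$ (it is positive because the torus is nontrivial and $\det=1$ forces weights of both signs, hence a positive one). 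Thus $\lambda(p^k)\in B_{n,p}$ precisely when $k w_{\max}=\alpha$; more usefully, the set $U_p\,\lambda(p^{k})\,U_p$, with $U_p$ the fixed special maximal compact, has $\|\cdot\|_p$ contained in a band $[p^{kw_{\max}-\kappa_p},\,p^{kw_{\max}+\kappa_p}]$ by compactness of $U_p$, exactly as in \eqref{eq:b}. The Cartan (KAK) decomposition over $\Q_p$, together with the volume formula $m_{{\rm G}(\Q_p)}(U_p\lambda(p^k)U_p)\asymp p^{2\rho(\lambda)k}$ for a positive quantity $\rho(\lambda)$ (growth of spherical functions / the standard estimate on double-coset volumes, valid since ${\rm G}(\Q_p)$ is an Iwasawa group as noted around \eqref{eq:iwasawa}), shows that for $k$ chosen so that $p^{kw_{\max}}\le p^{\alpha}<p^{(k+1)w_{\max}}$ one can, after shrinking the band to land in $\|\cdot\|_p=p^{\alpha}$ exactly, extract a subset of $B_{n,p}$ of measure $\gg_p p^{c\alpha}$ for an explicit $c=c(\rho(\lambda),w_{\max})>0$. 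The rationality of $\sum_\ell v(\ell)p^{-\ell s}$ invoked in the proof of Theorem~\ref{prop:QMET} (via \cite[Th.~7.4]{Den}), i.e. $v(\ell)=\sum_i r_i(\ell)p^{a_i\ell}$, gives the matching shape of $v(\alpha_p)=m_{{\rm G}(\Q_p)}(B_{n,p})$ and pins down that the leading exponent $a:=\max_i a_i/(\text{appropriate normalization})$ is a fixed positive rational independent of $p$ — this is what makes $a$ uniform.

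The main obstacle is exactly this last uniformity: one must check that the exponent $a$ governing the polynomial growth of $m_{{\rm G}(\Q_p)}(B_{n,p})$ in $p^{\alpha_p}$ does not depend on $p$, only on ${\rm G}\subset{\rm GL}_N$, and that the constant $c_p$ in front is $1$ for all but finitely many $p$ (those of good reduction where $U_p={\rm G}(\Z_p)$ and the Bruhat–Tits / Satake data are $p$-independent). I would handle this by a reduction-mod-$p$ argument: for good primes the Iwahori–Hecke combinatorics and the cardinalities $|{\rm G}(\F_p)|$, $|B_{n,p}/{\rm G}(\Z_p)$-cells$|$ are given by universal polynomials in $p$ coming from the root datum, so the exponent is visibly $p$-independent; the finitely many bad primes only affect $c_p$. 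The density input "$G_{p_0}\Gamma_n$ dense" from strong approximation is not needed here — only the existence of a $\Q_p$-split torus, i.e. $\Q_p$-isotropy, which is precisely the hypothesis built into the notion of isotropic $n$.
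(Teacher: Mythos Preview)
Your reduction to local lower bounds $m_{{\rm G}(\Q_p)}(B_{n,p})\ge c_p\, p^{a\alpha_p}$ with $a$ independent of $p$ and $c(n)=\prod_{p|n}c_p$ is exactly the paper's strategy, but two of your steps have genuine gaps.

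First, the single-cocharacter construction does not reliably produce elements inside $B_{n,p}$. For good $p$ the set $U_p\lambda(p^k)U_p$ consists entirely of elements of norm $p^{kw_{\max}}$ (left and right multiplication by ${\rm G}(\Z_p)$ preserves $\|\cdot\|_p$), so when $w_{\max}\nmid\alpha_p$ there is nothing there to ``shrink the band'' into; the band inclusion \eqref{eq:b} you invoke goes the wrong way for this. The paper proceeds in the opposite direction: it takes an \emph{arbitrary} $g\in B_{n,p}$, writes $g=u_1 s\omega u_2$ via the Cartan decomposition \eqref{eq:cartan}, and proves $\|g\|_p\le c_p'c_p''\,|\Delta_p(s)|^\ell$ where $\Delta_p$ is the product of positive roots; then Macdonald's bound $m_{{\rm G}(\Q_p)}(U_psU_p)\ge |\Delta_p(s)|$ together with a comparison of $U_psU_p$ and $V_pgV_p$ gives $m_{{\rm G}(\Q_p)}(V_pgV_p)\gg_p (p^{\alpha_p})^{1/\ell}$, and $V_pgV_p\subset B_{n,p}$ since $V_p\subset{\rm G}(\Z_p)$. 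The crucial uniformity of $\ell$ is obtained by a highest-weight argument: the set of weights of the fixed embedding ${\rm G}\hookrightarrow{\rm GL}_N$ restricted to the various tori ${\rm S}_p$ is finite by the classification of semisimple groups and representations, so a single $\ell$ with $\xi\le\Delta_p^\ell$ for every dominant weight $\xi$ exists. Your Denef appeal is per-prime and carries no information across primes; the ``universal polynomial in $p$ from the root datum'' idea points in the right direction but must still identify one exponent valid across all the $\Q_p$-forms that arise, which is precisely the content of that highest-weight step.

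Second, the $\Q$-isotropic improvement is not an analogy with Proposition~\ref{l:vol}; that proposition concerns $m_{Y_n}(Y_n)$ and gives no control on the constants $c_p$. The paper's mechanism is substantive: first treat the $\Q$-\emph{split} case, where the torus and its weight decomposition are defined over $\Q$, so one can check directly that $c_p'=c_p''=c_p'''=1$ for almost all $p$; then for general $\Q$-isotropic ${\rm G}$, Borel--Tits supplies a $\Q$-split simply connected subgroup ${\rm H}\subset{\rm G}$, the split case gives $m_{{\rm H}(\Q_p)}(B_{n,p}\cap{\rm H}(\Q_p))\ge (p^{\alpha_p})^{1/\ell}$ for almost all $p$, and the measure comparison \eqref{eq:measure} on $U_p{\rm H}(\Q_p)$ transfers this to $m_{{\rm G}(\Q_p)}(B_{n,p})$. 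Without such a passage through a global split object, your cocharacter is only $\Q_p$-rational and varies with $p$, so nothing forces $c_p=1$ for almost all $p$.
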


\begin{proof}
Let $n=\prod_p p^{\alpha_p}$ be the prime decomposition of $n$.
Since $B_n^{\rm f}$ is the product of the sets $B_{n,p}$ (see (\ref{eq:bnp})), it is sufficient to prove that
there exists $c_p>0$ such that
$$
m_{{\rm G}(\Q_p)}(B_{n,p})\ge c_p\, (p^{\alpha_p})^a,
$$
When ${\rm G}$ is split over $\Q$, we show that $c_p=1$ for almost all $p$.

Recall that $U_p$ is a special maximal compact subgroup.
Therefore, by \cite[3.3.3]{tits}, we have a Cartan decomposition
$$
{\rm G}(\Q_p)=U_p{\rm Z}_p(\Q_p)U_p,
$$
where ${\rm Z}_p$ is the centraliser of a maximal $\Q_p$-split torus ${\rm S}_p$ in ${\rm G}$.
We fix a set $\Pi_p$ of (restricted) simple roots for ${\rm G}$ with respect to ${\rm S}_p$, and set
$$
S_p^+=\{s\in {\rm S}(\Q_p):\, |\chi(s)|_p\ge 1\quad \hbox{for $\chi\in \Pi_p$}\}.
$$
Then the Cartan decomposition takes form
\begin{equation}\label{eq:cartan}
{\rm G}(\Q_p)=U_pS_p^+\Omega_pU_p,
\end{equation}
where $\Omega_p$ is a finite subset of ${\rm Z}_p(\Q_p)$.
Since $U_p$ is compact and $\Omega_p$ is finite, there exists $c_p'>0$ such that 
for $g=u_1s\omega u_2\in U_pS_p^+\Omega_pU_p$, we have 
\begin{equation}\label{eq:l1}
\|g\|\le c_p'\|s\|_p.
\end{equation}
Consider the representation $\rho:{\rm G}\to \hbox{GL}_N$ 
corresponding to the embedding ${\rm G}\subset \hbox{GL}_N$ and denote by $\Phi_p$
the set of weights of this representation with respect to ${\rm S}_p$. 
Since ${\rm S}_p$ is split over $\Q_p$, the action of ${\rm S}_p(\Q_p)$ on $\Q_p^N$ 
is completely reducible, and 
\begin{equation}\label{eq:l2}
\|s\|_p\le c_p''\cdot \max_{\xi\in \Phi_p} |\xi(s)|_p
\end{equation}
for some $c_p''>0$.

Let $\Pi_p^\vee$ denote the set of fundamental weights corresponding to $\Pi_p$.
A weight $\xi$ is called dominant if 
$$
\xi=\prod_{\psi\in \Pi_p^\vee} \psi^{n_\psi}
$$
for some nonnegative integers $n_{\psi}$. It follows from
\cite[Ch.~3, \S1.9]{ov} that there exists $k_0\in \N$ such that
every weight $\xi$ can be written as
\begin{equation}\label{eq:xi0}
\xi^{k_0}=\prod_{\chi\in \Pi_p} \chi^{m_\chi}
\end{equation}
with $m_\chi\in \Z$. Moreover, if $\xi$ is dominant, then
the integers $m_\chi$ are non-negative.

By \cite[Th.~7.2]{bt1}, there exists a semisimple subgroup $\tilde {\rm G}_p$ of $\rm G$
which is split over $\Q_p$, contains ${\rm S}_p$ as a maximal torus, and the set
$\Pi_p$ forms the set of simple roots for ${\rm S}_p$ in $\tilde{\rm G}_p$.
The linear representations of $\tilde{\rm G}_p$ 
defined over $\Q_p$ are described by the theory of highest weights.
In particular, it follows from the description of possible weights
(see \cite[Ch.~3, \S2.2]{ov})
that the maximum in (\ref{eq:l2}) can be taken over the dominant weights
in $\Phi_p$. Moreover, it follows from the classification of 
semisimple groups and their representations that 
the set of all possible weights appearing in $\rho|_{{\rm S}_p}$ for some $p$ is finite.
Let $\Delta_p$ be the product of all positive roots of ${\rm S}_p$ in ${\rm G}$
counted with multiplicities. Then we deduce from (\ref{eq:xi0}) that there exists $\ell \in \N$,
independent of $p$, such that
\begin{equation}\label{eq:l3}
\xi\le \Delta_p^\ell
\end{equation}
for all dominant weights $\xi$ appearing in $\rho|_{{\rm S}_p}$.

Now combining (\ref{eq:l1}), (\ref{eq:l2}) and (\ref{eq:l3}), we deduce that for all $g=u_1s\omega u_2\in
U_pS_p^+\Omega_pU_p$, we have
$$
\|g\|_p\le  (c_p'c_p'')\,|\Delta_p(s)|^\ell,
$$
and when $g\in B_{n,p}$, we obtain
\begin{equation}\label{eq:d1}
|\Delta_p(s)|\ge  (c_p'c_p'')^{-1/\ell}\,(p^{\alpha_p})^{1/\ell}.
\end{equation}
By \cite[3.2.15]{m71},
\begin{equation}\label{eq:d2}
m_{{\rm G}(\Q_p)}(U_psU_p)\ge |\Delta_p(s)|.
\end{equation}
Since both ${\rm G}(\Z_p)$ and $U_p$ are compact open subgroups,
${\rm G}(\Z_p)\cap U_p$ has finite index in $U_p$, and there exists an open
normal subgroup $V_p$ of $U_p$ contained in ${\rm G}(\Z_p)$. Then
for $g=u_1s\omega u_2\in U_pS_p^+\Omega_pU_p$, we obtain
\begin{align}\label{eq:l5}
m_{{\rm G}(\Q_p)}(V_pgV_p)&=m_{{\rm G}(\Q_p)}(u_1V_p s(\omega V_p\omega^{-1})\omega u_2)\\
&=m_{{\rm G}(\Q_p)}(V_p s(\omega V_p\omega^{-1})).\nonumber
\end{align}
By compactness, both $V_p$ and $V_p\cap \omega V_p\omega^{-1}$ have finite index in $U_p$.
Therefore, it follows that
\begin{equation}\label{eq:l6}
m_{{\rm G}(\Q_p)}(U_psU_p)\le c_p''' \,m_{{\rm G}(\Q_p)}(V_ps(\omega V_p\omega^{-1}))
\end{equation}
for some $c_p'''>0$. Hence, when $g\in B_{n,p}$, we deduce from (\ref{eq:d1})--(\ref{eq:d2}) 
combined with (\ref{eq:l5})--(\ref{eq:l6}) that
$$
m_{{\rm G}(\Q_p)}(V_pgV_p)\ge (c_p'c_p'')^{-1/\ell} (c_p''')^{-1}\,(p^{\alpha_p})^{1/\ell}.
$$
Since $n$ is admissible, there exists $g\in B_{n,p}$, and 
since $V_p\subset {\rm G}(\Z_p)$, we have $V_pgV_p\subset B_{n,p}$. 
This implies the first claim of the proposition.
 
To prove the second claim,
we assume that ${\rm G}$ is split over $\Q$. Then the Cartan decomposition (\ref{eq:cartan})
is of the form ${\rm G}(\Q_p)=U_pS_p^+U_p$.
For almost all $p$, we can take ${\rm S}_p$ to be a maximal $\Q$-split torus $\rm S$.
Then the action of ${\rm S}(\Q)$ on $\Q^N$ is completely reducible.
Since for almost all $p$, we have $U_p={\rm G}(\Z_p)$,
the estimate (\ref{eq:l1}) holds with $c_p'=1$ for almost all $p$.
Since the action of ${\rm S}$ is completely reducible over $\Q$, for $s\in {\rm S}$,
$$
\rho(s)=\sum_{\xi\in \Phi_p} \xi(s) v_\xi
$$
for some $v_\xi \in \hbox{Mat}_N(\Q)$. Hence, in the estimate (\ref{eq:l2}), 
$$
c_p''=\max_{\xi} \|v_\xi\|_p,
$$
and it is clear that for almost all $p$, $c_p''=1$. Finally, in the argument (\ref{eq:l5})--(\ref{eq:l6}),
we can take $V_p=U_p={\rm G}(\Z_p)$ for almost all $p$. Therefore, for almost all $p$, we obtain
$$
m_{{\rm G}(\Q_p)}(B_{n,p})\ge (p^{\alpha_p})^{1/\ell},
$$
which completes the proof of the proposition.
\end{proof}

\section{Counting estimates}\label{sec:counting}

In this section we prove an estimate on the number of lattice points in the regions $B_n(x,\epsilon)$
with admissible $n$.

\begin{thm}\label{thm:count} 
For every coprime $n,q\in\N$ with admissible $n$, $\bar\gamma\in \Gamma_n$, $x\in {\rm G}(\R)$, $\kappa,\eta>0$, and
$\epsilon\in (0,\epsilon_0(\kappa,x)]$, 
the following estimate holds
\begin{align*}
|B_n(x,\epsilon)\cap \bar\gamma\Gamma_n(q)|
=&\frac{m_{G_n}(B_n(x,\epsilon))}{m_{Y_{n,q}}(Y_{n,q})}+O_x\left( \epsilon^{\kappa+\dim({\rm G})} m_{G_n^{\rm f}}(B_n^{\rm f}) \right)\\
&+O_{x,\eta}\left(\epsilon^{-\kappa\dim({\rm G})}m_{G_n^{\rm f}}(B_n^{\rm f})^{1-(4\iota({\rm G}))^{-1}+\eta}\right),
\end{align*}
where $\epsilon_0(\kappa,x)$ and the implied constants are uniform over $x$ in bounded sets.
\end{thm}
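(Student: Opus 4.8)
The plan is to carry out the classical thickening reduction of a lattice-point count to an inner product on $L^2(Y_{n,q})$ and then to feed in the uniform mean ergodic theorem, Theorem \ref{prop:QMET}. The delicate point throughout is to keep every implied constant and every threshold uniform in $n$ and $q$, which dictates precisely which auxiliary sets one is allowed to thicken with.

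First I would fix the scale $\delta:=\epsilon^{1+\kappa}$ and introduce the bump $\psi_\delta:=\psi_\delta^\infty\otimes\frac{1}{m_{G_n^{\rm f}}(\mathcal O_n^{\rm f})}\mathbf 1_{\mathcal O_n^{\rm f}}$ on $G_n$, where $\mathcal O_n^{\rm f}=\prod_{p\mid n}{\rm G}(\Z_p)$ is the compact open subgroup of (\ref{eq:o}) and $\psi_\delta^\infty$ is the normalised indicator of $B^\infty(e,\delta)$; note that $\psi_\delta$ is symmetric. Convolving $\mathbf 1_{B_n(x,\epsilon)}$ with $\psi_\delta$ leaves the finite component of $B_n(x,\epsilon)$ untouched because each $B_{n,p}$ is ${\rm G}(\Z_p)$-invariant, so all the thickening happens in the Archimedean factor. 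Using right-invariance of $d$ (so that $B^\infty(x,\epsilon)=B^\infty(e,\epsilon)x$, and hence $m_{{\rm G}(\R)}(B^\infty(x,\epsilon))$ is independent of $x$) together with the conjugation-distortion bound $x\,B^\infty(e,\delta)\,x^{-1}\subseteq B^\infty(e,C_x\delta)$, one gets the two-sided sandwich $\mathbf 1_{B_n(x,\epsilon'-C_x\delta)}\le\mathbf 1_{B_n(x,\epsilon')}*\psi_\delta\le\mathbf 1_{B_n(x,\epsilon'+C_x\delta)}$, valid once $\epsilon\le\epsilon_0(\kappa,x)$, so that $C_x\delta<\epsilon$ and $\delta$ lies below a uniform threshold. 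Here $C_x$ is bounded on bounded subsets of ${\rm G}(\R)$, which is the source both of the $O_x$-errors and of the dependence of $\epsilon_0$ on $x$ and $\kappa$.

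Next I would periodise: let $\xi_\delta$ and $\xi_\delta^{\bar\gamma}$ be the $\Gamma_n(q)$-periodisations of $\psi_\delta$ and of $g\mapsto\psi_\delta(\bar\gamma^{-1}g)$, both non-negative on $Y_{n,q}$ with total mass $1$ against $m_{Y_{n,q}}$. Since $\Gamma_n\cap(W\times\mathcal O_n^{\rm f})={\rm G}(\Z)$ for a small neighbourhood $W$ of $e$ in ${\rm G}(\R)$, and ${\rm G}(\Z)$ is a fixed discrete group, the support of $\psi_\delta$ injects into $Y_{n,q}$ for all $\delta$ below a threshold independent of $n$ and $q$; hence $\|\xi_\delta\|_{L^2(Y_{n,q},m_{Y_{n,q}})}=\|\psi_\delta\|_{L^2(G_n)}\ll\delta^{-\dim({\rm G})/2}$ uniformly, using $m_{{\rm G}(\R)}(B^\infty(e,\delta))\asymp\delta^{\dim({\rm G})}$ and the uniform two-sided bound $m_{G_n^{\rm f}}(\mathcal O_n^{\rm f})\asymp 1$ (it differs from $1$ only at a fixed finite set of primes). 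Unfolding the periodisation and applying the sandwich shows that $|B_n(x,\epsilon)\cap\bar\gamma\Gamma_n(q)|$ is caught between $\langle\pi_{Y_{n,q}}(\mathbf 1_{B_n(x,\epsilon-C'_x\delta)}\,dm_{G_n})\xi_\delta,\xi_\delta^{\bar\gamma}\rangle_{m_{Y_{n,q}}}$ and the same expression with $\epsilon+C'_x\delta$, for a suitable $C'_x\asymp C_x$.

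It then remains to evaluate these inner products. Writing $\mathbf 1_{B_n(x,\epsilon')}\,dm_{G_n}=m_{G_n}(B_n(x,\epsilon'))\,\beta_{n,x,\epsilon'}$, splitting $\xi_\delta=\frac{1}{m_{Y_{n,q}}(Y_{n,q})}\mathbf 1+\xi_\delta^0$ and $\xi_\delta^{\bar\gamma}$ likewise with $\xi_\delta^0,(\xi_\delta^{\bar\gamma})^0\in L^2_0(Y_{n,q})$, and using that $\pi_{Y_{n,q}}(\beta_{n,x,\epsilon'})$ fixes constants and preserves $L^2_0$ (so the cross terms drop out), one finds
\[
\big\langle\pi_{Y_{n,q}}\big(\mathbf 1_{B_n(x,\epsilon')}\,dm_{G_n}\big)\xi_\delta,\xi_\delta^{\bar\gamma}\big\rangle_{m_{Y_{n,q}}}
=\frac{m_{G_n}(B_n(x,\epsilon'))}{m_{Y_{n,q}}(Y_{n,q})}
+m_{G_n}(B_n(x,\epsilon'))\big\langle\pi^0_{Y_{n,q}}(\beta_{n,x,\epsilon'})\xi_\delta^0,(\xi_\delta^{\bar\gamma})^0\big\rangle_{m_{Y_{n,q}}}.
\]
By Theorem \ref{prop:QMET}, $\|\pi^0_{Y_{n,q}}(\beta_{n,x,\epsilon'})\|\ll_\eta m_{G^{\rm f}_n}(B^{\rm f}_n)^{-(4\iota({\rm G}))^{-1}+\eta}$ (the finite part $B^{\rm f}_n$ being the same for every $\epsilon'$), and $m_{G_n}(B_n(x,\epsilon'))\ll\epsilon^{\dim({\rm G})}m_{G^{\rm f}_n}(B^{\rm f}_n)$, so the error term is $\ll_{x,\eta}(\epsilon/\delta)^{\dim({\rm G})}m_{G^{\rm f}_n}(B^{\rm f}_n)^{1-(4\iota({\rm G}))^{-1}+\eta}$; replacing $\epsilon'$ by $\epsilon$ in the main term (and absorbing the smoothing losses) costs $\ll_x\epsilon^{\dim({\rm G})-1}\delta\,m_{G^{\rm f}_n}(B^{\rm f}_n)$, where I use $m_{Y_{n,q}}(Y_{n,q})\ge m_{Y_n}(Y_n)$, which is bounded below uniformly in $n$ by Proposition \ref{l:vol}. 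Plugging in $\delta=\epsilon^{1+\kappa}$ turns these two errors into exactly $\epsilon^{\kappa+\dim({\rm G})}m_{G^{\rm f}_n}(B^{\rm f}_n)$ and $\epsilon^{-\kappa\dim({\rm G})}m_{G^{\rm f}_n}(B^{\rm f}_n)^{1-(4\iota({\rm G}))^{-1}+\eta}$, which is the asserted bound. The one real obstacle is the uniformity bookkeeping: the smoothing must perturb only the ${\rm G}(\R)$-factor so that $B^{\rm f}_n$ is never touched, the injectivity threshold for $\psi_\delta$ must be independent of $n$ and $q$, and one must use the uniform statements (Theorem \ref{prop:QMET} and Proposition \ref{l:vol}) in place of their $n$-dependent precursors.
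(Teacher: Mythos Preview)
Your proof is correct and follows essentially the same thickening-and-periodisation strategy as the paper: the choice $\delta=\epsilon^{1+\kappa}$, the uniform injectivity of $\mathcal O_n(\delta)\hookrightarrow Y_{n,q}$ via ${\rm G}(\Z)$, the well-roundedness input (Lemma~\ref{lem:well}), the uniform mean ergodic theorem (Theorem~\ref{prop:QMET}), and the uniform lower bound on $m_{Y_n}(Y_n)$ (Proposition~\ref{l:vol}) are exactly the ingredients used in the paper, and your error terms match line for line.

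The one technical divergence is in how the ergodic theorem is invoked. The paper applies Theorem~\ref{prop:QMET} to a \emph{single} periodised bump $\phi^{\bar\gamma}_{n,q,\theta}$, then uses Chebyshev's inequality to locate a specific $h\in\mathcal O_n(\theta)$ at which the pointwise value of $\pi_{Y_{n,q}}(\beta_{n,x,\rho})\phi^{\bar\gamma}_{n,q,\theta}$ is close to its integral; Lemma~\ref{lem:boundpm} then bounds the count pointwise. You instead pair \emph{two} periodised bumps $\xi_\delta,\xi_\delta^{\bar\gamma}$ and read off the count directly from the $L^2$ inner product after splitting off the constant component. Your route is slightly cleaner (no Chebyshev step, no auxiliary choice of $h$) and is the variant more common in the Duke--Rudnick--Sarnak and Eskin--McMullen tradition; the paper's Chebyshev route is the one emphasised in \cite{GorNev2}. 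Both yield identical main and error terms, and the uniformity bookkeeping you flag (smoothing only at the Archimedean place, injectivity threshold independent of $n,q$, use of Proposition~\ref{l:vol}) is handled the same way in both.
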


This result will be deduced from Theorem \ref{prop:QMET} following the strategy of \cite{GorNev2}.

We set
\begin{equation}\label{eqn:Oepsilon}
\mathcal O_n(\epsilon):=B^\infty(e,\epsilon) \times \prod_{p|n} {\rm G}(\Z_p),
\end{equation}
where $B^\infty(e,\epsilon)$ is the $\epsilon$-neighbourhood of identity in ${\rm G}(\R)$
with respect to the Riemannian metric $d$ on ${\rm G}(\R)$. Since the metric $d$ is right invariant,
$\mathcal O_n(\epsilon)$ is a symmetric neighbourhood of identity in $G_n$.

\begin{lem}\label{l:o}
For every $n\in\N$ and $\epsilon\in (0,1]$,
$$
\epsilon^{\dim({\rm G})}\ll m_{G_n} (\mathcal O_n(\epsilon)) \ll \epsilon^{\dim({\rm G})}.
$$
\end{lem}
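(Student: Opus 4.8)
The plan is to exploit the product structure of both the measure and the set: since
$m_{G_n}=m_{{\rm G}(\R)}\otimes\bigotimes_{p\mid n} m_{{\rm G}(\Q_p)}$ and
$\mathcal O_n(\epsilon)=B^\infty(e,\epsilon)\times\prod_{p\mid n}{\rm G}(\Z_p)$, we obtain at once
$$
m_{G_n}(\mathcal O_n(\epsilon))=m_{{\rm G}(\R)}(B^\infty(e,\epsilon))\cdot\prod_{p\mid n}m_{{\rm G}(\Q_p)}({\rm G}(\Z_p)),
$$
so it suffices to estimate the archimedean and the non-archimedean factors separately, with constants independent of $n$. Note the $n$-dependence sits entirely in the (finite) product over $p\mid n$.

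For the archimedean factor I would invoke standard Riemannian volume comparison. In geodesic normal coordinates at $e$, the closed ball $B^\infty(e,\epsilon)$ is, for $\epsilon$ below the injectivity radius $\epsilon_1$, the image under $\exp_e$ of the Euclidean ball of radius $\epsilon$, and the Riemannian volume form pulls back to $\sqrt{\det g}$ times Lebesgue measure, which is bounded above and below by positive constants near the origin; hence $m_{{\rm G}(\R)}(B^\infty(e,\epsilon))\asymp\epsilon^{\dim({\rm G})}$ for $\epsilon\in(0,\epsilon_1]$. For $\epsilon\in[\epsilon_1,1]$ the quantity $m_{{\rm G}(\R)}(B^\infty(e,\epsilon))$ is squeezed, by monotonicity and the relative compactness of $B^\infty(e,1)$ (Hopf--Rinow), between the positive constant $m_{{\rm G}(\R)}(B^\infty(e,\epsilon_1))$ and the finite constant $m_{{\rm G}(\R)}(B^\infty(e,1))$, while $\epsilon^{\dim({\rm G})}$ lies between $\epsilon_1^{\dim({\rm G})}$ and $1$; adjusting the implied constants gives $m_{{\rm G}(\R)}(B^\infty(e,\epsilon))\asymp\epsilon^{\dim({\rm G})}$ on the whole range $(0,1]$. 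This factor does not depend on $n$, so its constants are trivially uniform.

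For the non-archimedean factor the key point is the normalisation $m_{{\rm G}(\Q_p)}(U_p)=1$ together with the fact that $U_p={\rm G}(\Z_p)$ for all but finitely many $p$; let $S$ be this finite exceptional set. Then $m_{{\rm G}(\Q_p)}({\rm G}(\Z_p))=1$ for $p\notin S$, while for $p\in S$ the subgroup ${\rm G}(\Z_p)$ is compact open and therefore has positive finite Haar measure $c_p$. Consequently
$$
\prod_{p\mid n}m_{{\rm G}(\Q_p)}({\rm G}(\Z_p))=\prod_{p\in S,\ p\mid n}c_p,
$$
which lies between the two positive finite constants $\prod_{p\in S}\min(1,c_p)$ and $\prod_{p\in S}\max(1,c_p)$, uniformly over all $n$. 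Multiplying this by the archimedean estimate yields the lemma.

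There is no genuine obstacle here; the only place demanding care is the uniformity in $n$ of the $p$-adic product, which rests entirely on the two structural facts that $m_{{\rm G}(\Q_p)}(U_p)=1$ for every $p$ and that $U_p={\rm G}(\Z_p)$ for almost all $p$ --- without the former a product over an unbounded set of primes could equal any positive number, and without the latter it could diverge to $0$ or $\infty$.
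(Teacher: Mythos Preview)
Your proof is correct and follows essentially the same approach as the paper's: both split $m_{G_n}(\mathcal O_n(\epsilon))$ into its archimedean and non-archimedean factors, estimate $m_{{\rm G}(\R)}(B^\infty(e,\epsilon))\asymp\epsilon^{\dim({\rm G})}$ via exponential (normal) coordinates, and bound $\prod_{p\mid n} m_{{\rm G}(\Q_p)}({\rm G}(\Z_p))$ uniformly in $n$ using that $U_p={\rm G}(\Z_p)$ for almost all $p$ together with compact-openness of ${\rm G}(\Z_p)$ at the finitely many exceptional primes. Your treatment is simply a bit more explicit about the archimedean estimate on the full range $(0,1]$.
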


\begin{proof}
Writing $B^\infty(e,\epsilon)$ in the exponential coordinates for the Riemannian metric, we deduce that
\begin{equation}\label{eq:mm}
\epsilon^{\dim({\rm G})}\ll m_{{\rm G}(\R)} (B^\infty(e,\epsilon)) \ll \epsilon^{\dim({\rm G})}.
\end{equation}
We recall that the measures $m_{{\rm G}(\Q_p)}$ are normalised so that
 $m_{{\rm G}(\Q_p)}(U_p)=1$, and $U_p={\rm G}(\Z_p)$ for all but finitely many primes $p$. 
For the remaining primes, we observe that since $U_p$ and ${\rm G}(\Z_p)$ are compact open subgroups in ${\rm G}(\Q_p)$,
it follows that $U_p\cap {\rm G}(\Z_p)$ has finite index in both $U_p$ and ${\rm G}(\Z_p)$.
Therefore, 
$$
1\ll \prod_{p|n} m_{{\rm G}(\Q_p)}({\rm G}(\Z_p)) \ll 1.
$$
Combining this estimate with (\ref{eq:mm}), we deduce the lemma.
\end{proof}

\begin{lem}\label{lem:well}
We have 
\begin{enumerate}
\item[(i)] For every $n\in \N$, $x\in {\rm G}(\R)$, and $\epsilon,\epsilon'\in (0,1]$,
\begin{equation*}
\mathcal O_n(\epsilon') B_n(x,\epsilon) \mathcal O_n(\epsilon') \subset B_n(x,\epsilon+c_1(x)\epsilon'),
\end{equation*}
where $c_1(x)$ is uniform over $x$ in bounded sets.

\item[(ii)]
For every $n\in \N$, $x\in {\rm G}(\R)$, and $\epsilon,\epsilon'\in (0,\epsilon_0(x)]$,
$$
m_{G_n}(B_n(x,\epsilon+\epsilon') )\le m_{G_n}(B_n(x,\epsilon))+c_2(x) \epsilon'\epsilon^{\dim({\rm
    G})-1}\, m_{G^{\rm f}_n}(B_n^{\rm f}),
$$
where $\epsilon_0(x)$ and $c_2(x)$ are uniform over $x$ in bounded sets.
\end{enumerate}
\end{lem}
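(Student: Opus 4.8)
The plan is to reduce both parts to the corresponding assertions at the archimedean place by exploiting the product decomposition $G_n={\rm G}(\R)\times G_n^{\rm f}$, together with the product decompositions $\mathcal O_n(\epsilon')=B^\infty(e,\epsilon')\times\prod_{p|n}{\rm G}(\Z_p)$, $B_n(x,\epsilon)=B^\infty(x,\epsilon)\times B_n^{\rm f}$ and $B_n^{\rm f}=\prod_{p|n}B_{n,p}$. At the finite places everything will come from an exact invariance property, so that the genuine content of the lemma consists of two elementary facts about Riemannian balls in ${\rm G}(\R)$, with all constants uniform over $x$ in bounded subsets of ${\rm G}(\R)$.

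For (i), note first that left (resp.\ right) multiplication on ${\rm Mat}_N(\Q_p)$ by an element of ${\rm GL}_N(\Z_p)$, and likewise by its inverse, recombines rows (resp.\ columns) with $p$-adically integral coefficients and is therefore norm non-increasing; hence $\|\cdot\|_p$ is invariant under left and right multiplication by ${\rm GL}_N(\Z_p)\supset{\rm G}(\Z_p)$, so the set $B_{n,p}=\{g\in{\rm G}(\Q_p):\|g\|_p=p^{\alpha_p}\}$ is ${\rm G}(\Z_p)$-bi-invariant and ${\rm G}(\Z_p)B_{n,p}{\rm G}(\Z_p)=B_{n,p}$ for every $p|n$. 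It remains to check that $B^\infty(e,\epsilon')B^\infty(x,\epsilon)B^\infty(e,\epsilon')\subset B^\infty(x,\epsilon+c_1(x)\epsilon')$. Given $g_1,g_2\in B^\infty(e,\epsilon')$ and $h\in B^\infty(x,\epsilon)$, the triangle inequality gives $d(g_1hg_2,x)\le d(g_1hg_2,hg_2)+d(hg_2,h)+d(h,x)$; by right invariance of $d$ the first summand equals $d(g_1,e)\le\epsilon'$, and the last is at most $\epsilon$. For the middle one, the key point is that, for a right-invariant metric, the left translation $L_h$ distorts distances by at most $\|\Ad(h)\|$ (its differential, transported to the Lie algebra by right translations, is conjugation by $h$, that is $\Ad(h)$), so that $d(hg_2,h)=d(hg_2,h\cdot e)\le\|\Ad(h)\|\,\epsilon'$. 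Since $\epsilon\le 1$, for $x$ in a bounded set $K$ the element $h$ lies in a fixed bounded neighbourhood $K'$ of $K$, and $M(K)\sceq\sup_{h\in K'}\|\Ad(h)\|<\infty$ by continuity and boundedness; taking $c_1(x)\sceq 1+M(K)$ proves (i), uniformly over $x\in K$.

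For (ii), the product structure $m_{G_n}=m_{{\rm G}(\R)}\otimes m_{G_n^{\rm f}}$ gives $m_{G_n}(B_n(x,\rho))=m_{{\rm G}(\R)}(B^\infty(x,\rho))\,m_{G_n^{\rm f}}(B_n^{\rm f})$ for $\rho=\epsilon$ and $\rho=\epsilon+\epsilon'$, so after dividing by the positive number $m_{G_n^{\rm f}}(B_n^{\rm f})$ the inequality reduces to the thin-shell bound $m_{{\rm G}(\R)}(B^\infty(x,\epsilon+\epsilon'))-m_{{\rm G}(\R)}(B^\infty(x,\epsilon))\le c_2(x)\,\epsilon'\,\epsilon^{\dim({\rm G})-1}$. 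One would establish this in geodesic normal coordinates centered at $x$: for $\rho$ below the injectivity radius at $x$, the ball $B^\infty(x,\rho)$ corresponds to the Euclidean $\rho$-ball in $T_x{\rm G}(\R)\cong\R^{\dim({\rm G})}$, and the Haar measure $m_{{\rm G}(\R)}$ pulls back to $J_x(v)\,dv$ with $J_x$ continuous and positive at the origin. Choosing $\epsilon_0(x)$ below the injectivity radius and small enough that $J_x\le 2J_x(0)$ on the $\epsilon_0(x)$-ball, the shell has $m_{{\rm G}(\R)}$-measure at most a constant times $(\epsilon+\epsilon')^{\dim({\rm G})}-\epsilon^{\dim({\rm G})}$, which by the mean value theorem is at most $\dim({\rm G})\,\epsilon'(\epsilon+\epsilon')^{\dim({\rm G})-1}$; for $\epsilon'\le\epsilon$ (the range in which the lemma will be applied) this is $\ll\epsilon'\,\epsilon^{\dim({\rm G})-1}$. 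The injectivity-radius lower bound and the bound on $J_x$, hence $\epsilon_0(x)$ and $c_2(x)$, depend only on the fixed Riemannian metric over a bounded set containing $x$, so they are uniform over $x$ in bounded sets.

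The only step that needs genuine care is this uniformity of $c_1(x)$, $c_2(x)$ and $\epsilon_0(x)$ over $x$ in bounded subsets of ${\rm G}(\R)$: it is exactly what forces us to track the left-translation distortion $\|\Ad(h)\|$ of the right-invariant metric in (i) and the uniform geometric bounds (injectivity radius, density in normal coordinates) in (ii). In both cases it follows from continuity of the quantities in question together with compactness of the closures of the bounded sets involved.
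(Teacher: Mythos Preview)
Your proof is correct and follows the paper's approach: both parts are reduced to the archimedean factor, (i) is handled via right-invariance and the distortion under conjugation (the paper writes $d(e,bu_2b^{-1})\ll_b d(e,u_2)$ where you make the Lipschitz constant explicit as $\|\Ad(h)\|$), and (ii) is handled via polar/normal coordinates (the paper simply cites the disintegration formula in \cite[p.~66]{sak} for the shell estimate you spell out). Your observation that (ii) as literally stated needs $\epsilon'\le\epsilon$ is apt---the paper's only application, in the proof of Theorem~\ref{thm:count}, takes $\epsilon'=c_1(x)\epsilon^{\kappa+1}\ll\epsilon$, so this causes no trouble downstream.
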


\begin{proof}
To prove (i), we observe that 
$B_n(x,\epsilon)=B^\infty(x,\epsilon) \times \prod_{p|n} B_{n,p}$ and the sets $B_{n,p}$ are invariant
under ${\rm G}(\Z_p).$ Thus, it suffices to prove that
for every $u_1,u_2 \in B^\infty(e,\epsilon')$ and $b \in B^\infty(x,\epsilon)$, we have
\begin{equation}\label{eq:i}
d(x,u_1bu_2) \le \epsilon+c_1(x) \epsilon'.
\end{equation}
Using the right invariance of the Riemannian metric on ${\rm G}(\R)$, we obtain
\begin{align*}
d(x,u_1 b u_2)&=d(x (b u_2)^{-1}, u_1)\le d(x (b u_2)^{-1},e)+d(e,u_1)\\
&\le d(x,b u_2)+\epsilon' \le d(x b^{-1}, b u_2 b^{-1})+\epsilon'\\
&\le d(x b^{-1},e)+d(e, b u_2 b^{-1})+\epsilon'\le \epsilon+d(e, b u_2 b^{-1})+\epsilon'.
\end{align*}
Since $d(e, b u_2 b^{-1})\ll_b d(e,u_2)\le \epsilon'$ where the implied constant is uniform over $b$ in bounded sets,
we deduce (\ref{eq:i}).

To prove (ii), it is sufficient to show that
$$
m_{{\rm G}(\R)}(B^\infty(x,\epsilon+\epsilon'))- m_{{\rm G}(\R)}(B^\infty(x,\epsilon)) \le c_2(x) \epsilon'
\epsilon^{\dim({\rm G})-1}.
$$
This follows from the disintegration formula for the measure $m_{{\rm G}(\R)}$
as in \cite[p.~66]{sak}.
\end{proof}

Let $\chi_{n,\epsilon}$ be the constant multiple of the characteristic function of
the set $\mathcal{O}_n(\epsilon)$ which is normalised so that
$\int_{G_n} \chi_{n,\epsilon} \,\,dm_{G_n}=1$. For $\bar\gamma \in \Gamma_n$, we also define a function $\phi_{n,q,\epsilon}^{\bar\gamma}$
on $Y_{n,q}=G_n/\Gamma_n(q)$ by
$$
\phi_{n,q,\epsilon}^{\bar\gamma}(g\Gamma_n(q)):=\sum_{\gamma \in
  \Gamma_n(q)}\chi_{n,\epsilon}(g\gamma\bar\gamma )
=\sum_{\delta \in
  \bar \gamma \Gamma_n(q)}\chi_{n,\epsilon}(g\delta ).
$$
Note that $\bar\gamma\Gamma_n(q)=\Gamma_n(q)\bar\gamma$ because 
$\Gamma_n(q)$ is normal in $\Gamma_n$.
Clearly, $\phi_{n,q,\epsilon}^{\bar\gamma}$ is a bounded measurable function on $Y_{n,q}$
with compact support.

The following lemma shows that averages of this function can be used to approximate the cardinality
$|B_n(x,\epsilon) \cap \bar\gamma\Gamma_n(q)|$.

\begin{lem}\label{lem:boundpm}
For every coprime $n,q\in\N$, $\bar\gamma\in \Gamma_n$, $x\in {\rm G}(\R)$, 
$\epsilon,\epsilon'\in (0,1]$, and $h\in \mathcal O_n(\epsilon')$,
\begin{align*}
|B_n(x,\epsilon) \cap \bar\gamma\Gamma_n(q)|&\le 
\int_{B_n(x,\epsilon+c_1(x)\epsilon')}\phi_{n,q,\epsilon'}^{\bar\gamma }(g^{-1} h \Gamma_n(q))\,
dm_{G_n}(g),\\
|B_n(x,\epsilon) \cap \bar\gamma\Gamma_n(q)|&\ge 
\int_{B_n(x,\epsilon - c_1(x)\epsilon')} \phi_{n,q,\epsilon'}^{\bar\gamma }(g^{-1} h \Gamma_n(q))\,
dm_{G_n}(g).
 \end{align*}
\end{lem}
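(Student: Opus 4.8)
The plan is to relate the lattice-point count $|B_n(x,\epsilon)\cap\bar\gamma\Gamma_n(q)|$ to an integral of $\phi_{n,q,\epsilon'}^{\bar\gamma}$ by the standard ``thickening'' (or ``$\epsilon$-smoothing'') trick: one thickens each lattice point $\bar\gamma\gamma$ in $B_n(x,\epsilon)$ by a small neighbourhood $\mathcal O_n(\epsilon')$, observes that these thickened regions are essentially disjoint and fit inside $B_n(x,\epsilon+c_1(x)\epsilon')$, and conversely cover $B_n(x,\epsilon-c_1(x)\epsilon')$. The function $\phi_{n,q,\epsilon'}^{\bar\gamma}$ is precisely the push-forward to $Y_{n,q}$ of the indicator of $\mathcal O_n(\epsilon')$ translated along the coset $\Gamma_n(q)\bar\gamma$, normalized to unit mass on $G_n$, so integrating it against the indicator of a ball in $G_n$ will recover (bounds on) the count.

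Concretely, I would first fix $h\in\mathcal O_n(\epsilon')$ and unfold the definition: for $g\in G_n$,
\[
\phi_{n,q,\epsilon'}^{\bar\gamma}(g^{-1}h\Gamma_n(q))=\sum_{\gamma\in\Gamma_n(q)}\chi_{n,\epsilon'}(g^{-1}h\gamma\bar\gamma).
\]
Integrating over $g\in B_n(x,R)$ and swapping sum and integral, each term contributes $\int_{B_n(x,R)}\chi_{n,\epsilon'}(g^{-1}h\gamma\bar\gamma)\,dm_{G_n}(g)$, which, since $\chi_{n,\epsilon'}$ has total mass $1$ and is supported on the symmetric set $\mathcal O_n(\epsilon')$, equals $m_{G_n}\bigl(B_n(x,R)\cap h\gamma\bar\gamma\,\mathcal O_n(\epsilon')\bigr)/m_{G_n}(\mathcal O_n(\epsilon'))$, i.e.\ it is $1$ if $h\gamma\bar\gamma\,\mathcal O_n(\epsilon')\subset B_n(x,R)$ and $0$ if $h\gamma\bar\gamma\,\mathcal O_n(\epsilon')\cap B_n(x,R)=\emptyset$, and lies in $[0,1]$ otherwise. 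This is where Lemma \ref{lem:well}(i) enters: if $\bar\gamma\gamma\in B_n(x,\epsilon)$ then, since $h\in\mathcal O_n(\epsilon')$ and $\mathcal O_n(\epsilon')$ is symmetric, $h\gamma\bar\gamma\,\mathcal O_n(\epsilon')\subset\mathcal O_n(\epsilon')B_n(x,\epsilon)\mathcal O_n(\epsilon')\subset B_n(x,\epsilon+c_1(x)\epsilon')$; hence each of the $|B_n(x,\epsilon)\cap\bar\gamma\Gamma_n(q)|$ relevant values of $\gamma$ contributes exactly $1$ to the integral over $B_n(x,\epsilon+c_1(x)\epsilon')$, giving the upper bound. (One must be mildly careful that distinct $\gamma\in\Gamma_n(q)$ give distinct cosets $\gamma\bar\gamma$, which is immediate, and that $\bar\gamma\gamma\in B_n(x,\epsilon)\iff h\gamma\bar\gamma\,\mathcal O_n(\epsilon')$ meets $B_n(x,\epsilon)$ in the conventions used; the symmetry of $\mathcal O_n(\epsilon')$ handles the bookkeeping between $\bar\gamma\Gamma_n(q)$ and $\Gamma_n(q)\bar\gamma$.)

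For the lower bound I would argue dually: if $\gamma\in\Gamma_n(q)$ is such that $h\gamma\bar\gamma\,\mathcal O_n(\epsilon')$ meets $B_n(x,\epsilon-c_1(x)\epsilon')$, then by the same application of Lemma \ref{lem:well}(i) (run in the other direction) the point $\bar\gamma\gamma$ lies in $B_n(x,\epsilon)$; so the nonzero terms in the sum defining the integrand, integrated over $B_n(x,\epsilon-c_1(x)\epsilon')$, are indexed by a subset of the $\gamma$ counted by $|B_n(x,\epsilon)\cap\bar\gamma\Gamma_n(q)|$, and each contributes at most $1$. Thus $\int_{B_n(x,\epsilon-c_1(x)\epsilon')}\phi_{n,q,\epsilon'}^{\bar\gamma}(g^{-1}h\Gamma_n(q))\,dm_{G_n}(g)\le|B_n(x,\epsilon)\cap\bar\gamma\Gamma_n(q)|$, which is the claimed inequality.

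The proof is essentially a bookkeeping argument and there is no serious analytic obstacle; the only point requiring care — and the one I would present most carefully — is verifying the set-theoretic containments between $h\gamma\bar\gamma\,\mathcal O_n(\epsilon')$ and the balls $B_n(x,\epsilon\pm c_1(x)\epsilon')$ using that $\mathcal O_n(\epsilon')$ is a \emph{symmetric} neighbourhood of the identity and that $B_n(x,\epsilon)$ is right-invariant under $\prod_{p\mid n}{\rm G}(\Z_p)$ in the finite places, together with Lemma \ref{lem:well}(i). Once those containments are in place, the inequalities follow by exchanging the (locally finite) sum over $\gamma\in\Gamma_n(q)$ with the integral and using that $\chi_{n,\epsilon'}$ is a probability density supported on $\mathcal O_n(\epsilon')$.
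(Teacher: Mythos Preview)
Your proposal is correct and is exactly the standard thickening argument the paper has in mind: the paper does not spell out a proof but simply says that, with Lemma~\ref{lem:well}(i) in hand, the argument is identical to \cite[Lemma~2.1]{GorNev2}, and what you have written is precisely that argument. The only cosmetic point to tidy is the occasional switch between $\bar\gamma\gamma$ and $\gamma\bar\gamma$; since $\Gamma_n(q)\lhd\Gamma_n$ the coset $\bar\gamma\Gamma_n(q)=\Gamma_n(q)\bar\gamma$ is the same set and the count is unaffected, but it is cleanest to work throughout with the elements $\gamma\bar\gamma$ that actually appear in the definition of $\phi_{n,q,\epsilon'}^{\bar\gamma}$.
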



\begin{proof}
Suppose that for some $\delta\in B_n(x,\epsilon)\cap \bar \gamma \Gamma_n(q)$ and $h\in \mathcal O_n(\epsilon')$,
we have $\chi_{n,\epsilon'}(g^{-1}h\delta )\ne 0$. Then by Lemma \ref{lem:well}(i),
\begin{equation}\label{eqg}
g\in h\delta \mathcal O_n(\epsilon')^{-1}\subset O_n(\epsilon')B_n(\epsilon)O_n(\epsilon')\subset B_n(x,\epsilon+c_1(x)\epsilon').
\end{equation}
Since the function $\chi_{n,\epsilon'}$ is non-negative, and 
\begin{equation}\label{i}
\int_{G_n} \chi_{n,\epsilon'}
(g^{-1})\,dm_{G_n}(g)=
\int_{G_n} \chi_{n,\epsilon'} \,\,dm_{G_n}=1,
\end{equation}
it follows from (\ref{eqg}) that 
\begin{align*}
&\int_{B_n(x,\epsilon+c_1(x)\epsilon')}\phi_{n,q,\epsilon'}^{\bar\gamma }(g^{-1} h \Gamma_n(q))\,
dm_{G_n}(g)\\
\ge & 
\sum_{\delta \in B_n(x,\epsilon) \cap
  \bar \gamma \Gamma_n(q)} \int_{B_n(x,\epsilon+c_1(x)\epsilon')} \chi_{n,\epsilon'}(g^{-1}h \delta )\,
dm_{G_n}(g)\\
= & 
\sum_{\delta \in B_n(x,\epsilon) \cap
  \bar \gamma \Gamma_n(q)} \int_{G_n} \chi_{n,\epsilon'}(g^{-1}h \delta )\,
dm_{G_n}(g)
\ge |B_n(x,\epsilon) \cap \bar\gamma\Gamma_n(q)|.
\end{align*}
This proves the first inequality.

To prove the second inequality, we observe that if 
$\chi_{n,\epsilon'}(g^{-1}h \delta )\ne 0$ for some $g\in B_n(x,\epsilon - c_1(x)\epsilon')$,
$h\in \mathcal O_n(\epsilon')$ and $\delta\in \bar \gamma \Gamma_n(q)$, then
by Lemma \ref{lem:well}(i),
$$
\delta\in h^{-1}g \mathcal O_n(\epsilon')\subset O_n(\epsilon')B_n(\epsilon - c_1(x)\epsilon')O_n(\epsilon') \subset B_n(x,\epsilon).
$$
This implies that
\begin{align*}
&\int_{B_n(x,\epsilon - c_1(x)\epsilon')} \phi_{n,q,\epsilon'}^{\bar\gamma }(g^{-1} h \Gamma_n(q))\,
dm_{G_n}(g)\\
=&\sum_{\delta \in B_n(x,\epsilon) \cap
  \bar \gamma \Gamma_n(q)} \int_{B_n(x,\epsilon-c_1(x)\epsilon')} \chi_{n,\epsilon'}(g^{-1}h \delta )\,
dm_{G_n}(g)\\
\le &
\sum_{\delta \in B_n(x,\epsilon) \cap
  \bar \gamma \Gamma_n(q)} \int_{G_n} \chi_{n,\epsilon'}(g^{-1}h \delta )\,
dm_{G_n}(g)=|B_n(x,\epsilon) \cap
  \bar \gamma \Gamma_n(q)|,
\end{align*}
where we used that $\chi_{n,\epsilon'}\ge 0$ and (\ref{i}). Hence, the second inequality also holds.
\end{proof}

\begin{proof}[Proof of Theorem~\ref{thm:count}]
We first show that there exists $\theta_0>0$ such that for any distinct $\gamma_1,\gamma_2 \in \Gamma_n$, 
\begin{equation}\label{eqn:Odisj}
\mathcal O_n(\theta_0)\gamma_1 \cap \mathcal O_n(\theta_0)\gamma_2=\emptyset.
\end{equation}
Indeed, suppose that for some 
$$
(g_\infty,g_f),(h_\infty,h_f) \in \mathcal O_n(\theta)=
B^\infty(e,\theta) \times \prod_{p|n} {\rm G}(\Z_p)\quad\hbox{and}\quad \gamma \in \Gamma_n,
$$
we have
$$
(g_\infty,g_f)=(h_\infty,h_f)(\gamma,\gamma).
$$
Then 
$$
(\gamma,\gamma)=(g_\infty h_\infty^{-1},g_f h_f^{-1})\in B^\infty(e,\theta)^2 \times \prod_{p|n} {\rm G}(\Z_p).
$$
In particular, we conclude that $\gamma\in {\rm G}(\Z)$, and hence $\gamma=e$ if $\theta$ is
sufficiently small.

It follows from  \eqref{eqn:Odisj} for every $\theta\in (0,\theta_0]$,
\begin{equation}\label{eqn:Oeps}
\mu_{Y_{n,q}}(\mathcal O_n(\theta) \Gamma_n(q))=\frac{m_{G_n}(\mathcal O_n(\theta))}{m_{Y_{n,q}}(Y_{n,q})}.
\end{equation}
Moreover,
$$
\int_{Y_{n,q}}\phi_{n,q,\theta}^{\bar\gamma }\,d\mu_{Y_{n,q}}
=\int_{G_n} \chi_{n,\theta}(g)\,\frac{dm_{G_n}(g)}{m_{Y_{n,q}}(Y_{n,q})}=\frac{1}{m_{Y_{n,q}}(Y_{n,q})},
$$
and similarly,
$$
\|\phi_{n,q,\theta}^{\bar\gamma }\|_2^2
=\int_{G_n} \chi_{n,\theta}^2(g)\,\frac{dm_{G_n}(g)}{m_{Y_{n,q}}(Y_{n,q})}=\frac{m_{G_n}(\mathcal O_n(\theta))^{-1}}{m_{Y_{n,q}}(Y_{n,q})}.
$$
By Theorem~\ref{prop:QMET}, for every $\rho,\eta>0$, there exists $c_\eta>0$ such that
\begin{align*}
&\left\|\pi_{Y_{n,q}}(\beta_{n,x,\rho})\phi_{n,q,\theta}^{\bar\gamma}-\int_{Y_{n,q}}\phi_{n,q,\theta}^{\bar\gamma}\,
  d\mu_{Y_{n,q}}\right\|_{2}\\
\le\,&\, c_\eta\, m_{G^{\rm f}_n}(B_n^{\rm f})^{-(4\iota({\rm G}))^{-1}+\eta}\|\phi_{n,q,\theta}^{\bar\gamma}\|_{2}.
 \end{align*}
Therefore, we deduce that for every $\delta>0$,
\begin{align*}
&\mu_{Y_{n,q}}\left(\left\{h\Gamma_n(q) : \left|\pi_{Y_{n,q}}(\beta_{n,x,\rho})\phi_{n,q,\theta}^{\bar\gamma }(h\Gamma_n(q))-\frac{1}{  m_{Y_{n,q}}(Y_{n,q})  }\right|>\delta\right\}\right) \\
\le\, &\, c_\eta^2 \delta^{-2}\frac{m_{G_n}(\mathcal
  O_n(\theta))^{-1}}{m_{Y_{n,q}}(Y_{n,q})}m_{G^{\rm f}_n}(B_n^{\rm f})^{-(2\iota({\rm G}))^{-1}+2\eta}.
\end{align*}
Let us take $\delta>0$ such that 
\begin{equation}\label{eqn:ht}
\mu_{Y_{n,q}}(\mathcal O_n(\theta) \Gamma_n(q)) > c_\eta^2 \delta^{-2}\frac{m_{G_n}(\mathcal
  O_n(\theta))^{-1}}{m_{Y_{n,q}}(Y_{n,q})}m_{G^{\rm f}_n}(B_n^{\rm f})^{-(2\iota({\rm G}))^{-1}+2\eta}.
\end{equation}
Note that it follows from \eqref{eqn:Oeps} that we may choose $\delta$ so that
\begin{align*}
\delta &=O_\eta \left(m_{G_n}(\mathcal O_n(\theta))^{-1}m_{G^{\rm f}_n}(B_n^{\rm
    f})^{-(4\iota({\rm G}))^{-1}+\eta}\right)\\
&= O_\eta \left(\theta^{-\dim({\rm G})}m_{G^{\rm f}_n}(B_n^{\rm
    f})^{-(4\iota({\rm G}))^{-1}+\eta}\right),
\end{align*}
where we used Lemma \ref{l:o}.
Then we deduce from (\ref{eqn:ht}) that
there exists $h \in \mathcal O_n(\theta)$ satisfying
$$
\left|\pi_{Y_{n,q}}(\beta_{n,x,\rho})\phi_{n,q,\theta}^{\bar\gamma
  }(h\Gamma_n(q))-\frac{1}{  m_{Y_{n,q}}(Y_{n,q})  }\right|\le \delta,
$$
which gives
$$
\left|\int_{B_n(x,\rho)}\phi_{n,q,\theta}^{\bar\gamma
  }(g^{-1}h\Gamma_n(q))\, dm_{G_n}(g) -\frac{m_{G_n}(B_n(x,\rho))}{  m_{Y_{n,q}}(Y_{n,q})  }\right|\le
\delta\, m_{G_n}(B_n(x,\rho)).
$$
Therefore,
\begin{align}\label{eq:fff}
&\int_{B_n(x,\rho)}\phi_{n,q,\theta}^{\bar\gamma
  }(g^{-1}h\Gamma_n(q))\, dm_{G_n}(g)\\
 = &
 \frac{m_{G_n}(B_n(x,\rho))}{  m_{Y_{n,q}}(Y_{n,q})  }
+O_\eta \left(\theta^{-\dim({\rm G})}m_{G_n}(B_n(x,\rho)) m_{G^{\rm f}_n}(B_n^{\rm
    f})^{-(4\iota({\rm G}))^{-1}+\eta}\right).\nonumber
\end{align}

Now to finish the proof of the theorem, we observe that according to Lemma~\ref{lem:boundpm},
$$
|B_n(x,\epsilon) \cap \bar\gamma\Gamma_n(q) | \le
\int_{B_n(x,\epsilon+c_1(x)\epsilon^{\kappa+1})}\phi_{n,q,\epsilon^{\kappa+1}}^{\bar\gamma }(g^{-1} h
\Gamma_n(q))\, dm_{G_n}(g).
$$
Combining this estimate with \eqref{eq:fff}, we obtain
\begin{align}\label{eq:f_f}
&|B_n(x,\epsilon) \cap \bar\gamma\Gamma_n(q) | 
\le \frac{m_{G_n}(B_n(x,\epsilon+c_1(x)\epsilon^{\kappa+1}))}{  m_{Y_{n,q}}(Y_{n,q})  }\\
&+O_\eta \left(\epsilon^{-(\kappa+1) \dim({\rm G})}m_{G_n}(B_n(x,\epsilon+c_1(x)\epsilon^{\kappa+1})) m_{G^{\rm f}_n}(B_n^{\rm
    f})^{-(4\iota({\rm G}))^{-1}+\eta}\right).\nonumber
\end{align}
By Lemma \ref{lem:well}(ii), for sufficiently small $\epsilon>0$,
\begin{align*}
m_{G_n}(B_n(x,\epsilon+c_1(x)\epsilon^{\kappa+1}))
=m_{G_n}(B_n(x,\epsilon)) +O_x\left(\epsilon^{\kappa+\dim({\rm G})} m_{G^{\rm f}_n}(B_n^{\rm f})\right),
\end{align*}
where the implied constants are uniform over $x$ in bounded sets.
Also, we have 
\begin{align*}
m_{G_n}(B_n(x,\epsilon+c_1(x)\epsilon^{\kappa+1}))
=O_x\left(\epsilon^{\dim({\rm G})} m_{G^{\rm f}_n}(B_n^{\rm f})\right).
\end{align*}
Therefore, (\ref{eq:f_f}) implies that
\begin{align*}
|B_n(x,\epsilon) \cap \bar\gamma\Gamma_n(q) | 
\le &\frac{m_{G_n}(B_n(x,\epsilon))}{  m_{Y_{n,q}}(Y_{n,q})  }+
O_x\left(\epsilon^{\kappa+\dim({\rm G})} m_{G^{\rm f}_n}(B_n^{\rm f})\right)\\
&+O_{x,\eta} \left(\epsilon^{-\kappa \dim({\rm G})} m_{G^{\rm f}_n}(B_n^{\rm
    f})^{1-(4\iota({\rm G}))^{-1}+\eta}\right).
\end{align*}
Here we used that $m_{Y_{n,q}}(Y_{n,q})\gg 1$ which follows from Proposition \ref{l:vol}.
This proves the required upper bound for $|B_n(x,\epsilon) \cap \bar\gamma\Gamma_n(q) |$.

The proof of the lower bound is similar,
and we use the second estimate from Lemma \ref{lem:boundpm}.
Note that in this proof we need to arrange that
$\epsilon-c_1(x)\epsilon^{\kappa+1}>0$ which holds for sufficiently small $\epsilon$,
depending on $\kappa,x$.
\end{proof}

\section{Completion of the proof}\label{sec:end}

In this section, we finish the proof of our main results stated in the Introduction.
It would be convenient to introduce a parameter
\begin{equation}\label{eq:A}
a({\rm G}):=\limsup_{n\to \infty} {}'\;\, \frac{\log m_{G_n^{\rm f}}(B_n^{\rm f})}{\log n},
\end{equation}
where the $\limsup$ is taken over all admissible integers $n$.
Note that according to Theorem \ref{thm:count}, the quantity $a({\rm G})$ measures
the polynomial growth rate of the number of rational points in $\rm G$ with given denominator 
and lying in a bounded subset of ${\rm G}(\R)$. By Proposition \ref{p:b_n}, $a({\rm G})>0$
if $\rm G$ is split over $\Q$. Given a finite set $\mathcal{P}$ of prime numbers, we also set
$$
a({\rm G},\mathcal{P}):=\limsup_{n\to \infty}{}' \;\,\frac{\log m_{G_n^{\rm f}}(B_n^{\rm f})}{\log n},
$$
where the $\limsup$ is taken over all admissible integers $n$ with prime divisors in $\mathcal{P}$.
If the group ${\rm  G}$ is isotropic over $\Q_p$ for every $p\in \mathcal{P}$, then
$a({\rm G},\mathcal{P})>0$ by Proposition \ref{p:b_n}.

Throughout this section we use the following simplified notation:
$$
d:=\dim({\rm G}),\;\;
a:=a({\rm G}),\;\; a_{\mathcal{P}}:=a({\rm G},\mathcal{P}),\;\;
\iota:=\iota({\rm G}).
$$
Recall that $\iota({\rm G})$ is computed in terms of the integrability exponent 
of automorphic representations (see Section \ref{sec:notation}).

For an integral polynomial $f$, we denote by $\Delta_n(f)$ the positive integer, coprime to $n$,
representing the greatest common divisor of $f(\gamma)$, $\gamma\in \Gamma_n$, in the ring $\Z[1/n]$.
We denote by $\delta_n(f)$ the number of prime factors of $\Delta_n(f)$.
Note that $\Delta_n(f)$ divides $\Delta_1(f)$ and $\delta_n(f)\le \delta_1(f)$.

The following result is a more precise version of Theorem \ref{thm:main_short}.

\begin{thm}\label{thm:mainn}
Let $\rm G\subset {\rm GL}_N$ be a simply connected $\Q$-simple algebraic
group  defined over $\Q$, which is split over $\Q$ and
$f_1,\ldots,f_t$ a collection of polynomials as in the Introduction.
Then for every $x\in {\rm G}(\R)$, $\alpha\in (0,\alpha_0)$ with 
$\alpha_0:=d^{-1}a(4\iota)^{-1}$ and admissible
$n\ge n_0(\alpha,x)$, there exists $z\in {\rm G}(\Q)$ satisfying
\begin{align*}
&d(x,z)\le n^{-\alpha},\\
&\hbox{\rm den}(z)=n,
\end{align*}
and $r$-prime in ${\rm Mat}_N(\Z[1/n])$,
where
$$
r=\delta_n(f_1\cdots f_t)+\left\lceil \frac{9 t\deg(f_1\cdots f_t)(d+1)^2}{a(4\iota)^{-1}-\alpha d} \right\rceil.
$$
The constant $n_0(\alpha,x)$ is uniform over $x$ in bounded  subsets of ${\rm G}(\R)$.
\end{thm}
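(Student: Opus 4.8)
The plan is to translate the theorem into a lattice‑point counting problem on $G_n$, feed the level‑uniform estimate of Theorem~\ref{thm:count} into a combinatorial sieve, and read off the values of $\alpha_0$ and $r$ from the resulting level of distribution. Throughout I use the abbreviations $d,a,\iota$ of this section and write $F:=f_1\cdots f_t$ and $\epsilon:=n^{-\alpha}$. \emph{Step 1 (reduction).} Since $\rm G$ is semisimple it admits no nontrivial character, so $\rm G\subset{\rm SL}_N$, and for $\gamma\in{\rm G}(\Q)$ one checks (using that $\gamma^{-1}$ is a polynomial with integral coefficients in the entries of $\gamma$) that $\|\gamma\|_p=p^{v_p(\hbox{\rm den}(\gamma))}$ for every prime $p$. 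Hence $\gamma\mapsto\gamma$ identifies $B_n(x,\epsilon)\cap\Gamma_n$ with $\{z\in{\rm G}(\Q):d(x,z)\le\epsilon,\ \hbox{\rm den}(z)=n\}$. For $\gamma$ in this set, $d(x,\gamma)\le\epsilon\le1$ with $x$ bounded forces $\|\gamma\|_\infty\ll1$, while $\|\gamma\|_p=p^{\alpha_p}$ for $p\mid n$; writing $F(\gamma)=\pm\big(\prod_{p\mid n}p^{v_p(F(\gamma))}\big)M(\gamma)$ with $M(\gamma)\in\N$ coprime to $n$, these bounds give $|M(\gamma)|\ll n^{\deg F}$. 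Since primes dividing $n$ are units in $\Z[1/n]$, the number of prime factors of $F(\gamma)$ in $\Z[1/n]$ equals the number of prime factors of $M(\gamma)$, so it suffices to produce $\gamma\in B_n(x,n^{-\alpha})\cap\Gamma_n$ with $M(\gamma)$ having at most $r$ prime factors.

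\emph{Step 2 (sieve set‑up).} Put $X:=|B_n(x,\epsilon)\cap\Gamma_n|$ and, for squarefree $m$ coprime to $n\Delta_n(F)$, set $a_m:=|\{\gamma\in B_n(x,\epsilon)\cap\Gamma_n:m\mid F(\gamma)\}|$. Splitting the congruence into the residue classes of $\Gamma_n/\Gamma_n(m)\cong{\rm G}(\Z/m\Z)$ (surjectivity by strong approximation \cite[\S7.4]{pr}) on which $F$ vanishes modulo $m$, and applying Theorem~\ref{thm:count} with $q=m$ to each of the $\omega_F(m):=|\{g\in{\rm G}(\Z/m\Z):F(g)\equiv0\}|$ classes, one gets $a_m=g(m)X+R_m$ with $g(m):=\omega_F(m)/|{\rm G}(\Z/m\Z)|$ multiplicative. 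Because the $f_i$ are distinct and absolutely irreducible, each $\{f_i=0\}$ is a geometrically irreducible hypersurface in $\rm G$, so Lang--Weil gives $g(p)=t/p+O(p^{-3/2})$ for almost all $p$, and $g(p)<1$ whenever $p\nmid n\Delta_n(F)$: thus $\mathcal A:=(a_m)$ is a sieve problem of dimension $t$ with the primes dividing $n\Delta_n(F)$ removed. The key point is that the error in Theorem~\ref{thm:count} is uniform in the level $q$, so
$$
|R_m|\ \ll_{x,\eta}\ \omega_F(m)\Big(\epsilon^{\kappa+d}m_{G_n^{\rm f}}(B_n^{\rm f})+\epsilon^{-\kappa d}m_{G_n^{\rm f}}(B_n^{\rm f})^{1-(4\iota)^{-1}+\eta}\Big).
$$

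\emph{Step 3 (level of distribution and sieving).} Using $\sum_{m<D}\omega_F(m)\ll D^{d+o(1)}$, Proposition~\ref{l:vol} (so $m_{Y_n}(Y_n)\asymp1$ and $X\asymp\epsilon^d m_{G_n^{\rm f}}(B_n^{\rm f})$), and Proposition~\ref{p:b_n} ($m_{G_n^{\rm f}}(B_n^{\rm f})\gg n^{a}$, hence $X\gg n^{a-\alpha d}\to\infty$ since $\alpha<\alpha_0$), I would substitute $\epsilon=n^{-\alpha}$ into the remainder bound and choose $\kappa$ so as to balance the two resulting powers of $n$; this optimisation — which is where the factor $d+1$ enters — yields $\sum_{m<D}|R_m|=o\big(X(\log D)^{-t}\big)$ for $D=n^{\beta}$ with any fixed $\beta<\frac{a(4\iota)^{-1}-\alpha d}{d(d+1)}$, the exponent being positive exactly because $\alpha<\alpha_0=d^{-1}a(4\iota)^{-1}$. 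Then the fundamental lemma of the combinatorial sieve in dimension $t$ (following \cite{HalRic,NevSar,GorNev}), applied with sifting level $z:=D^{1/s}$ for a fixed $s$ equal to a bounded multiple of $t$ above the sifting limit, gives
$$
S(\mathcal A,z)\ \gg\ X\prod_{\substack{p<z\\ p\nmid n\Delta_n(F)}}\big(1-g(p)\big)\ \gg\ X\,(\log z)^{-t}\ >\ 0,
$$
so there is $\gamma\in B_n(x,n^{-\alpha})\cap\Gamma_n$ such that no prime $<z$ other than those dividing $\Delta_n(F)$ divides $M(\gamma)$. The crux of the whole argument is this step: extracting a level of distribution $D$ that is a fixed positive power of $n$ \emph{uniformly in} $n$ — which is precisely why the uniform mean ergodic theorem (Theorem~\ref{prop:QMET}), the volume estimates of Section~\ref{sec:vol}, and the level‑uniform counting estimate (Theorem~\ref{thm:count}) were needed — together with the $\kappa$‑optimisation that produces the sharp exponent.

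\emph{Step 4 (conclusion).} For the $\gamma$ just produced, write $M(\gamma)=\Delta_n(F)\,M'$ with every prime factor of $M'$ at least $z=n^{\beta/s}$; by Step~1, $|M'|\le|M(\gamma)|\ll n^{\deg F}$, so $M'$ has at most $\frac{\log|M'|}{\log z}+O(1)\le\frac{s\deg F}{\beta}+O(1)$ prime factors. Since $s$ is a bounded multiple of $t$ and $\beta$ may be taken arbitrarily close to $\frac{a(4\iota)^{-1}-\alpha d}{d(d+1)}$, the quantity $\frac{s\deg F}{\beta}$ is at most $\frac{9t\deg(f_1\cdots f_t)(d+1)^2}{a(4\iota)^{-1}-\alpha d}$ with room to absorb the $O(1)$ once $n$ is large, whence $F(\gamma)$ has at most $\delta_n(F)+\big\lceil\frac{9t\deg(f_1\cdots f_t)(d+1)^2}{a(4\iota)^{-1}-\alpha d}\big\rceil=r$ prime factors in $\Z[1/n]$. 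Thus $z:=\gamma$ meets all the requirements for $n\ge n_0(\alpha,x)$, and the uniformity of $n_0(\alpha,x)$ and of all implied constants over bounded subsets of ${\rm G}(\R)$ is inherited from Theorem~\ref{thm:count} and Propositions~\ref{l:vol} and~\ref{p:b_n}.
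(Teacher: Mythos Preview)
Your proposal is correct and follows essentially the same route as the paper's own proof: feed the level-uniform counting estimate of Theorem~\ref{thm:count} into the Halberstam--Richert combinatorial sieve of dimension $t$ (the paper cites \cite[Th.~7.4]{HalRic} explicitly, with $s>9t$), optimise $\kappa$ to balance the two error terms, and read off $\alpha_0$ and $r$ from the resulting level of distribution. The only bookkeeping differences are these. First, the paper parametrises $\epsilon_n=m_{G_n^{\rm f}}(B_n^{\rm f})^{-\alpha'}$ with $\alpha'<d^{-1}(4\iota)^{-1}$ and carries all estimates in terms of $T_n(x)$, converting to $n^{-\alpha}$ only at the very end via the definition of $a({\rm G})$; you instead fix $\epsilon=n^{-\alpha}$ from the start and substitute $m_{G_n^{\rm f}}(B_n^{\rm f})\asymp n^a$ throughout --- equivalent once one has the lower bound $m_{G_n^{\rm f}}(B_n^{\rm f})\gg n^b$ for $b<a$. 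Second, the paper uses the cruder bound $|\Gamma_n^{(q)}\cap\{F=0\}|\le q^d$ and sums to get $\sum_{q<D}q^d\ll D^{d+1}$, whereas your Lang--Weil-based estimate $\sum_{m<D}\omega_F(m)\ll D^{d+o(1)}$ is one power sharper; this is why your level of distribution carries $d(d+1)$ rather than the paper's $(d+1)^2$, a saving you then give away to match the stated value of $r$. One small correction: the lower bound $m_{G_n^{\rm f}}(B_n^{\rm f})\gg n^{b}$ for $b$ close to $a({\rm G})$ is not what Proposition~\ref{p:b_n} asserts --- the exponent ``$a$'' there is a separate (generally smaller) constant used only to ensure $a({\rm G})>0$ --- but comes from the definition of $a({\rm G})$ itself, exactly as in the paper's argument.
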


\begin{proof}
By Theorem \ref{thm:count},
for every coprime $n,q\in\N$ with admissible $n$, $\bar\gamma\in \Gamma_n$, $x\in {\rm G}(\R)$, $\kappa,\eta>0$, and
$\epsilon\in (0,\epsilon_0(\kappa,x)]$, 
\begin{align}\label{eq:count}
|B_n(x,\epsilon)\cap \bar\gamma\Gamma_n(q)|
=&\frac{m_{G_n}(B_n(x,\epsilon))}{m_{Y_{n,q}}(Y_{n,q})}+O_x\left( \epsilon^{\kappa+d} m_{G_n^{\rm f}}(B_n^{\rm f}) \right)\\
&+O_{x,\eta}\left(\epsilon^{-\kappa d}m_{G_n^{\rm f}}(B_n^{\rm f})^{1-(4\iota({\rm G}))^{-1}+\eta}\right),\nonumber
\end{align}
where the implied constants are uniform over $x$ in bounded sets.
We apply this estimate with $\epsilon_n=m_{G_n^{\rm f}}(B_n^{\rm f})^{-\alpha'}$ 
where $n$ is sufficiently large, $\alpha'\in (0,\alpha_0')$, and
$\alpha_0':=\alpha_0/a=d^{-1}(4\iota)^{-1}$.
To optimise the error term in (\ref{eq:count}), we choose
\begin{equation}\label{eq:kappa}
\kappa=\frac{(4\iota)^{-1}-\alpha'd}{\alpha'(d+1)}.
\end{equation}
Note that the parameter $\alpha_0'$ is chosen to guarantee that $\kappa>0$.
Then (\ref{eq:count}) becomes
\begin{equation}\label{eq:main2}
|B_n(x,\epsilon_n)\cap \bar\gamma\Gamma_n(q)|
=\frac{m_{G_n}(B_n(x,\epsilon_n))}{m_{Y_{n,q}}(Y_{n,q})}
+O_{x,\eta}\left( m_{G_n^{\rm f}}(B_n^{\rm f})^{1-\alpha'(\kappa+d)+\eta} \right).
\end{equation}
It would be convenient to  set 
$$
T_n(x):=|B_n(x,\epsilon_n)\cap \Gamma_n|.
$$
Since
$$
m_{{\rm G}(\R)}(B^\infty(x,\epsilon))=m_{{\rm G}(\R)}(B^\infty(e,\epsilon))\gg \epsilon^d,
$$
we have 
\begin{equation}\label{eq:bbn}
m_{G_n}(B_n(x,\epsilon_n))\gg m_{G^{\rm f}_n}(B^{\rm f}_n)^{1-\alpha' d}.
\end{equation}
Using (\ref{eq:bbn}) and Proposition \ref{l:vol},
we deduce from (\ref{eq:main2}) with sufficiently small $\eta>0$ that 
\begin{align}\label{eq:TT}
T_n(x) &\gg 
\frac{m_{G^{\rm f}_n}(B^{\rm f}_n)^{1-\alpha' d}}{m_{Y_{n}}(Y_{n})}
+O_{x,\eta}\left(m_{G_n^{\rm f}}(B_n^{\rm f})^{1-\alpha'(\kappa+d)+\eta}\right)\\
&\gg  m_{G^{\rm f}_n}(B^{\rm f}_n)^{1-\alpha' d}\nonumber
\end{align}
when $n$ is sufficiently large.
In particular, it follows from the definition of $a=a({\rm G})$ that for every $b\in (0,a)$ and sufficiently large $n$,
\begin{equation}\label{eq:final}
T_n(x) \gg n^{b(1-\alpha'd)}.
\end{equation}
Since 
$$
m_{Y_{n,q}}(Y_{n,q})=m_{Y_{n}}(Y_{n})\cdot [\Gamma_n:\Gamma_n(q)],
$$
it follows from (\ref{eq:main2}) that
\begin{equation}\label{eq:T}
|B_n(x,\epsilon_n)\cap \bar\gamma\Gamma_n(q)|
=\frac{T_n(x)}{[\Gamma_n:\Gamma_n(q)]}
+O_{x,\eta}\left(m_{G_n^{\rm f}}(B_n^{\rm f})^{1-\alpha'(\kappa+d)+\eta}\right).
\end{equation}

Every $w\in \Z[1/n]$ can be uniquely written as $w=u\cdot [w]_n$ where $u$ is a unit in $\Z[1/n]$
and $[w]_n\in \mathbb{Z}_{\ge 0}$ is coprime to $n$.
Let $f=f_1\cdots f_t$ and 
$\mathcal{P}_{n,z}$ be the set of prime numbers which are coprime to $\Delta_n(f)n$ and bounded by $z$.
We denote by $S_{n,z}(x)$ the cardinality of the set of $\gamma\in B_n(x,\epsilon_n)\cap \Gamma_n$
such that $[f(\gamma)]_n$ is coprime to $\mathcal{P}_{n,z}$
(equivalently, $f(\gamma)$ is coprime to $\mathcal{P}_{n,z}$ in the ring $\Z[1/n]$).

We will apply the combinatorial sieve as in \cite[Th.~7.4]{HalRic} (see also \cite[Sec.~2]{NevSar}) to estimate
the quantity $S_{n,z}(x)$. For this we let 
$$
a_k:=| \{\gamma \in B_n(x,\epsilon_n)\cap\Gamma_n :\, [f(\gamma)]_n=k\}|.
$$
Then $T_n(x)=\sum_{k \ge 0} a_k$.
To apply the combinatorial sieve we need to verify the following three conditions:

\begin{enumerate}
\item[$(A_0)$] For every square-free $q$ divisible only by primes in $\mathcal{P}_{n,z}$,
\begin{equation}
\sum_{k=0 \bmod q} a_k=\frac{\rho(q)}{q}T_n(x)+R_q,
\end{equation}
where $\rho(q)$ is a nonnegative multiplicative function such that for primes $p\in \mathcal P_{n,z}$,
there exists $c_1<1$ satisfying
\begin{equation}\label{eqn:c1}
\frac{\rho(p)}{p} \le c_1.
\end{equation}

\item[$(A_1)$] Summing over square-free $q$ divisible only by primes in $\mathcal P_{n,z}$,
$$\sideset{}{'}\sum_{q \le T_n(x)^\tau} |R_q| \le c_2 T_n(x)^{1-\zeta}$$
for some $c_2,\tau,\zeta>0$.

\item[$(A_2)$] For some $w \in [2,z]$,
\begin{equation}\label{eqn:ellle}
-l \le \sum_{p \in \mathcal P_{n,z}:w\le p<z} \frac{\rho(p) \log p}{p}-t\log \frac{z}{w} \le c_3
\end{equation}
for some $c_3,l,t>0$.
\end{enumerate}

Once the  conditions $(A_0),(A_1),$ and $(A_2)$ are verified, by \cite[Th.~7.4]{HalRic},
for $z=T_n(x)^{\tau/s}$ with $s>9t$, we have the following estimate
\begin{equation}\label{eq:S}
S_{n,z}(x)\ge T_n(x)W(z)\left(C_1-C_2l\frac{(\log\log 3T_n(x))^{3t+2}}{\log T_n(x)}\right),
\end{equation}
where
$$
W(z)=\prod_{p\in \mathcal P_{n,z} : p \le z}\left(1-\frac{\rho(p)}{p}\right),
$$
and the constants $C_1,C_2>0$ are determined by $c_1,c_2,c_3,\tau,\zeta,t.$


We denote by $\Gamma_n^{(q)}$ the image of $\Gamma_n$ in $\hbox{GL}_N(\Z[1/n]/q\Z[1/n])$.
Note that 
$$
\Gamma_n^{(q)}\simeq \Gamma_n/\Gamma_n(q).
$$
To verify $(A_0)$, we observe that (\ref{eq:T}) implies that
\begin{align*}
\sum_{k=0\bmod q}a_k&=|\{\gamma \in B_n(x,\epsilon_n)\cap \Gamma_n :\, f(\gamma)=0\bmod q\}|\\
&=\sum_{\bar\gamma \in \Gamma_n^{(q)}: f(\bar\gamma)=0 \bmod q} | B_n(x,\epsilon_n)\cap \bar\gamma\Gamma_n(q)|\\
&=|\Gamma_n^{(q)}\cap \{f=0\}|\left(\frac{T_n(x)}{[\Gamma_n:\Gamma_n(q)]}+O_{x,\eta}\left(m_{G_n^{\rm
        f}}(B_n^{\rm f})^{1-\alpha'(\kappa+d)+\eta}\right)\right)\\
&=\frac{\rho(q)}{q}T_n(x)+O_{x,\eta} \left(|\Gamma_n^{(q)}\cap \{f=0\}|\cdot 
m_{G_n^{\rm f}}(B_n^{\rm f})^{1-\alpha'(\kappa+d)+\eta}\right),
\end{align*}
where 
$$
\rho(q):=\frac{q|\Gamma_n^{(q)}\cap \{f=0\}|}{[\Gamma_n:\Gamma_n(q)]}.
$$
Since $q$ is coprime to $\Delta_n(f)$, the polynomial $f$ 
is not identically zero on $\Gamma_n^{(q)}$ and $\rho(q)<q$.
Moreover, it follows from the strong approximation property that 
\begin{equation}\label{eq;ssplit}
\Gamma_n^{(q)}\simeq \prod_{p|q} \Gamma_n^{(p)},
\end{equation}
and for all but finitely many primes $p$,
$$
\Gamma_n^{(p)}\simeq{\rm G}^{(p)}(\F_p),
$$
where ${\rm G}^{(p)}$ denotes the reduction of ${\rm G}$ modulo $p$.
It follows from (\ref{eq;ssplit}) that the function $\rho$ is multiplicative

We observe that 
$$
{\rm G}^{(p)} \cap \{f=0\}= {\rm X}^{(p)}_1 \cup\cdots \cup {\rm X}^{(p)}_t
$$
where the varieties ${\rm X}^{(p)}_i={\rm G}^{(p)}\cap \{f_i=0\}$ are absolutely irreducible for all
but finitely many $p$ by the Noether theorem. Therefore, using the Lang--Weil estimate \cite{lw}, we deduce that
$$
|X^{(p)}_i(\mathbb{F}_p)|=p^{\dim({\rm G})-1}+O(p^{\dim({\rm G})-3/2}).
$$
Moreover, since $f_i$'s are distinct, for $i\ne j$, $\dim (X^{(p)}_i\cap X^{(p)}_j)\le \dim({\rm G})-2$
and 
$$
|(X^{(p)}_i\cap X^{(p)}_j)(\mathbb{F}_p)|=O(p^{\dim({\rm G})-2}).
$$
Hence,
$$
|{\rm G}^{(p)}(\F_p)\cap \{f=0\}|=t p^{\dim({\rm G})-1}+O(p^{\dim({\rm G})-3/2}).
$$
Since 
$$
|{\rm G}^{(p)}(\F_p)|=p^{\dim({\rm G})}+O(p^{\dim({\rm G})-1/2}),
$$
we deduce that 
\begin{equation}\label{eqn:rhop}
\rho(p)=t+O(p^{-1/2}).
\end{equation}
Since $\rho(p)<p$ for all primes $p\in \mathcal{P}_{n,z}$, this also implies that 
\eqref{eqn:c1} holds.
This establishes $(A_0)$ with 
$$
R_q=O_{x,\eta} \left(|\Gamma_n^{(q)}\cap \{f=0\}|\cdot 
m_{G_n^{\rm f}}(B_n^{\rm f})^{1-\alpha'(\kappa+d)+\eta}\right).
$$
It follows from (\ref{eq:TT}) that
\begin{align*}
  \sideset{}{'}\sum_{q\le T_n(x)^\tau} |R_q| &\ll_{x,\eta} \sum_{q\le T_n(x)^\tau} q^{d}
m_{G_n^{\rm f}}(B_n^{\rm f})^{1-\alpha'(\kappa+d)+\eta}\\
&\ll_{x,\eta} (T_n(x)^\tau)^{d+1} T_n(x)^{(1-\alpha'(\kappa+d)+\eta)/(1-\alpha' d)}.
\end{align*}
Since $\eta$ can be taken to be arbitrary positive number, we conclude
that
$$
 \sideset{}{'}\sum_{q\le T_n(x)^\tau} |R_q| \ll_{x,\eta} T_n(x)^{1-\zeta}
$$
with some $\zeta>0$, when
\begin{equation}\label{eqn:tau}
\tau<\tau_0:=(1-(1-\alpha'(\kappa+d))/(1-\alpha' d))/(d+1).
\end{equation}
Note that since $\kappa>0$, it follows that $\tau_0>0$.
This proves $(A_1)$. From (\ref{eq:kappa}),
$$
\tau_0=\frac{(4\iota)^{-1}-\alpha' d}{(d+1)^2(1-\alpha'd)}.
$$

It follows from (\ref{eqn:rhop}) and \cite[Th.~2.7(b)]{MonVau} that
$$
-c_3 \le \sum_{w\le p<z} \frac{\rho(p) \log p}{p}-t\log \frac{z}{w} \le c_3
$$
for some $c_3>0$. In particular, this implies the upper bound in (\ref{eqn:ellle}).
To establish the lower bound, we use the estimate
$$
\sum_{p|q} \frac{\log p}{p}=O(\log\log q)
$$
(see, for instance, \cite[Lem.~5.2]{GorNev}). This implies that condition $(A_2)$ holds with 
$$
l=O\left(\log\log (\Delta_n(f)n)\right)=O\left(\log\log n\right).
$$

Now we are in position to apply the main sieving argument (\ref{eq:S}).
Note that for $z=T_n(x)^{\tau/s}$, it follows from (\ref{eqn:rhop}) that
$$
W(z)\gg (\log z)^{-t}.
$$
Therefore, (\ref{eq:S}) gives
$$
S_{n,T_n(x)^{\tau/s}}(x)\ge \frac{T_n(x)}{(\log T_n(x))^{t}}\left(C_1-C_2'(\log\log n)\frac{(\log\log 3T_n(x))^{3t+2}}{\log T_n(x)}\right),
$$
Using (\ref{eq:final}), we deduce that for sufficiently large $n$,
$$
S_{n,T_n(x)^{\tau/s}}(x)\gg_x \frac{T_n(x)}{(\log T_n(x))^{t}}.
$$
Every $\gamma$ counted in $S_{n,T_n(x)^{\tau/s}}(x)$ satisfies 
\begin{equation}\label{eq:de}
d(x,\gamma)\le \epsilon_n\quad\hbox{and}\quad \gamma\in B_n^{\rm f}.
\end{equation}
This implies that $\hbox{den}(\gamma)=n$, and the numerator of $\gamma$ is $O_x(n)$.
In particular,
$$
[f(\gamma)]_n\ll_x n^{\deg(f)}.
$$
On the other hand, for any $\gamma$ counted in $S_{n,T_n(x)^{\tau/s}}(x)$,
all prime numbers $p$ which are coprime to $\Delta_n(f)$ and divide $[f(\gamma)]_n$ must satisfy
$$
p >z= T_n(x)^{\tau/s}.
$$
Thus, using (\ref{eq:final}), we deduce that the number of such prime factors is bounded from above by
\begin{align*}
\frac{\log (n^{\deg(f)})+O_x(1)}{\log (T_n(x)^{\tau/s})}&=\frac{s\deg(f)}{\tau b(1-\alpha'd)}+ o_x(1).
\end{align*}
Since this estimate holds for all $s>9t$, $b<a$ and $\tau<\tau_0$,
the number of such prime factors is at most
$$
\left\lceil\frac{9t\deg(f)}{\tau_0 a(1-\alpha'd)}\right\rceil
=\left\lceil \frac{9 t\deg(f)(d+1)^2}{a((4\iota)^{-1}-\alpha'd)}\right\rceil
$$
provided that $n$ is sufficiently large.
Therefore, we conclude that the element
$\gamma$ is $r$-prime in $\hbox{Mat}_N(\Z[1/n])$ with
$$
r=\delta_n(f)+\left\lceil \frac{9 t\deg(f)(d+1)^2}{a((4\iota)^{-1}-\alpha'd)} \right\rceil.
$$

For every $b\in (0,a)$ and sufficiently large $n$,
$$
\epsilon_n=m_{G_n^{\rm f}}(B_n^{\rm f})^{-\alpha'}\le n^{-b\alpha'}.
$$
Therefore, it follows from (\ref{eq:de}) that 
for every $\alpha<a\alpha_0'=\alpha_0$ and sufficiently large $n$,
$$
d(x,\gamma)\le n^{-\alpha}.
$$
This completes the proof of the theorem.
\end{proof}

When ${\rm G}$ is not assumed to be split over $\Q$, we have the following version of Theorem
\ref{thm:mainn}.

\begin{thm}\label{thm:mainn2}
Let $\rm G\subset {\rm GL}_N$ be a simply connected $\Q$-simple algebraic
group  defined over $\Q$, 
$f_1,\ldots,f_t$ a collection of polynomials as in the Introduction,
and $\mathcal{P}$ a finite collection of prime number such that ${\rm G}$
is isotropic over $\Q_p$ for all $p\in \mathcal{P}$.  
Then for every $x\in {\rm G}(\R)$, $\alpha\in (0,\alpha_0)$ with 
$\alpha_0:=d^{-1}a_{\mathcal{P}}(2\iota)^{-1}$ and admissible 
$n\ge n_0(\alpha,x)$ whose prime divisors are in $\mathcal{P}$, there exists $z\in {\rm G}(\Q)$ satisfying
\begin{align*}
&d(x,z)\le n^{-\alpha},\\
&\hbox{\rm den}(z)=n,
\end{align*}
and $r$-prime in ${\rm Mat}_N(\Z[1/n])$,
where
$$
r=\delta_n(f_1\cdots f_t)+\left\lceil \frac{9 t\deg(f_1\cdots f_t)(d+1)^2}{a_{\mathcal{P}}(2\iota)^{-1}-\alpha d} \right\rceil.
$$
The constant $n_0(\alpha,x)$ is uniform over $x$ in bounded  subsets of ${\rm G}(\R)$.
\end{thm}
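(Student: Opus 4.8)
The plan is to run the proof of Theorem~\ref{thm:mainn} essentially unchanged, making only the substitutions forced by the weaker isotropy hypothesis. The single genuinely new observation is that, since the prime divisors of $n$ are now confined to the \emph{finite} set $\mathcal{P}$, the ``set of prime divisors of $n$'' ranges over at most $2^{|\mathcal P|}$ possibilities; hence every quantity in the argument whose dependence is only on that set is automatically uniform over the admissible $n$. Three such quantities matter: the implied constant in the \emph{non-uniform} estimate Theorem~\ref{prop:QMET0}, the implied constants in Theorem~\ref{thm:count}, and the constant $c(n)$ in Proposition~\ref{p:b_n}. In particular, we may use Theorem~\ref{prop:QMET0} in place of Theorem~\ref{prop:QMET} throughout, which improves the integrability exponent from $(4\iota)^{-1}$ to $(2\iota)^{-1}$; this is exactly what yields $\alpha_0=d^{-1}a_{\mathcal P}(2\iota)^{-1}$ in place of $d^{-1}a(4\iota)^{-1}$.

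First I would redo Section~\ref{sec:counting}. Lemmas~\ref{l:o}, \ref{lem:well}, \ref{lem:boundpm} and Proposition~\ref{l:vol} do not mention isotropy and need no change. The proof of Theorem~\ref{thm:count} then goes through verbatim with Theorem~\ref{prop:QMET0} in place of Theorem~\ref{prop:QMET}, giving, for all $n$ whose prime divisors lie in $\mathcal{P}$,
\[
|B_n(x,\epsilon)\cap\bar\gamma\Gamma_n(q)|
=\frac{m_{G_n}(B_n(x,\epsilon))}{m_{Y_{n,q}}(Y_{n,q})}
+O_{x,\mathcal P}\!\bigl(\epsilon^{\kappa+d}m_{G_n^{\rm f}}(B_n^{\rm f})\bigr)
+O_{x,\mathcal P,\eta}\!\bigl(\epsilon^{-\kappa d}m_{G_n^{\rm f}}(B_n^{\rm f})^{1-(2\iota)^{-1}+\eta}\bigr),
\]
the implied constants now depending additionally on the fixed set $\mathcal{P}$.

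Next, the sieve argument of the proof of Theorem~\ref{thm:mainn} is carried out word for word with $(4\iota)^{-1}\rightsquigarrow(2\iota)^{-1}$ and $a\rightsquigarrow a_{\mathcal P}$. One sets $\epsilon_n=m_{G_n^{\rm f}}(B_n^{\rm f})^{-\alpha'}$ with $\alpha'<\alpha_0':=d^{-1}(2\iota)^{-1}$ and optimises with $\kappa=\bigl((2\iota)^{-1}-\alpha'd\bigr)\big/\bigl(\alpha'(d+1)\bigr)$. Every $n$ with prime divisors in $\mathcal{P}$ is isotropic, so Proposition~\ref{p:b_n} applies with a uniformly positive $c(n)$, whence $m_{G_n^{\rm f}}(B_n^{\rm f})\gg_{\mathcal P}n^{a}\to\infty$, the main term dominates, $a_{\mathcal P}>0$, and $T_n(x):=|B_n(x,\epsilon_n)\cap\Gamma_n|\gg m_{G_n^{\rm f}}(B_n^{\rm f})^{1-\alpha'd}$. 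Verification of conditions $(A_0)$--$(A_2)$ uses only strong approximation for $\mathrm G$ and the reductions $\Gamma_n^{(q)}\simeq\prod_{p|q}\Gamma_n^{(p)}$, $\Gamma_n^{(p)}\simeq\mathrm G^{(p)}(\mathbb F_p)$ for almost all $p$, none of which is affected by the isotropy assumption; it produces $\tau_0=\bigl((2\iota)^{-1}-\alpha'd\bigr)\big/\bigl((d+1)^2(1-\alpha'd)\bigr)$, and \cite[Th.~7.4]{HalRic} gives $S_{n,T_n(x)^{\tau/s}}(x)\gg_x T_n(x)/(\log T_n(x))^t$ for $s>9t$. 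Comparing the numerator size $[f(\gamma)]_n\ll_x n^{\deg(f_1\cdots f_t)}$ against the sieving level $z=T_n(x)^{\tau/s}$ and letting $s\to 9t^+$, $\tau\to\tau_0^-$, $\alpha'\to\alpha/a_{\mathcal P}$ yields the stated $r=\delta_n(f_1\cdots f_t)+\bigl\lceil 9t\deg(f_1\cdots f_t)(d+1)^2\big/\bigl(a_{\mathcal P}(2\iota)^{-1}-\alpha d\bigr)\bigr\rceil$, while $\epsilon_n\le n^{-\alpha}$ follows from $\alpha<\alpha_0=a_{\mathcal P}\alpha_0'$.

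The step I expect to require the most care — and the main obstacle — is the first reduction: checking rigorously that the dependence ``only on the set of prime divisors of $n$'' is indeed the sole source of non-uniformity in Theorem~\ref{prop:QMET0}, in Theorem~\ref{thm:count}, and in the constant $c(n)$ of Proposition~\ref{p:b_n}, so that confining the prime divisors of $n$ to $\mathcal{P}$ genuinely legitimises the passage to the exponent $(2\iota)^{-1}$; one must also keep the bookkeeping between $a_{\mathcal P}$ (a $\limsup$) and the polynomial lower bound of Proposition~\ref{p:b_n} consistent when optimising $\alpha'$ and the sieve parameters. Everything else is the substitution described above into the already-established proof of Theorem~\ref{thm:mainn}.
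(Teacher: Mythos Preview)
Your proposal is correct and follows essentially the same approach as the paper: the paper's own proof is a one-paragraph remark that the argument of Theorem~\ref{thm:mainn} goes through verbatim with Theorem~\ref{prop:QMET0} in place of Theorem~\ref{prop:QMET}, yielding the improved exponent $(2\iota)^{-1}$ at the cost of the threshold $n_0(\alpha,x)$ depending on $\mathcal{P}$. Your additional observation---that the finiteness of $\mathcal{P}$ forces uniformity of the constants in Theorem~\ref{prop:QMET0}, Theorem~\ref{thm:count}, and Proposition~\ref{p:b_n} over the admissible $n$---is exactly the point that makes this substitution legitimate, and you have supplied more detail than the paper itself does.
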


The proof of Theorem \ref{thm:mainn2} goes along the same lines as the proof of Theorem \ref{thm:mainn},
but instead of the estimate on the averaging operators given by Theorem \ref{prop:QMET},
we use Theorem \ref{prop:QMET0}. This leads to slightly better estimates for the parameters $\alpha$ and $r$,
but the parameter $n_0(\alpha,x)$ now might depend on the set $\mathcal{P}$.

\end{document}